\numberwithin{equation}{section}  
\theoremstyle{plain}
\newtheorem{thm}{Theorem}[section]
\newtheorem{cor}[thm]{Corollary}
\newtheorem{lem}{Lemma}[section]
\theoremstyle{definition}
\newtheorem{example}{Example}[section]
\theoremstyle{remark}
\def\R{{\mathbb R}}
\def\C{{\mathbb C}}
\def\Rn{{{\mathbb R}^n}}
\def\Rnx{{\R^n_x}}
\def\Sph{{\mathbb S}}
\def\FT{{\mathcal F}}
\def\L2tx{{L^2(\R_t\times\R^n_x)}}
\def\Lx{{L^2(\R^n_x)}}
\def\p#1{{\left({#1}\right)}}
\def\b#1{{\left\{{#1}\right\}}}
\def\n#1{{\left\|{#1}\right\|}}
\def\abs#1{{\left|{#1}\right|}}
\def\jp#1{{\left\langle{#1}\right\rangle}}
\def\supp{\operatorname{supp}}
\def\tr{\operatorname{Tr}}
\def\rank{\operatorname{rank}}
\def\va{\varphi}
\def\ka{\kappa}
\title{Smoothing estimates for non-dispersive equations}
\author[]{Michael Ruzhansky and Mitsuru Sugimoto}
\address{
  Michael Ruzhansky:
  \endgraf
  Department of Mathematics
  \endgraf
  Imperial College London
  \endgraf
  180 Queen's Gate, London SW7 2AZ, UK
  \endgraf
  {\it E-mail address} {\rm m.ruzhansky@imperial.ac.uk}
  \endgraf
  \medskip
  Mitsuru Sugimoto:
  \endgraf
  Graduate School of Mathematics
  \endgraf
  Nagoya University
  \endgraf
  Furocho, Chikusa-ku, Nagoya 464-8602, Japan
  \endgraf
  {\it E-mail address} {\rm sugimoto@math.nagoya-u.ac.jp}
  }
\thanks{The first author was supported in parts by 
the EPSRC Grant EP/K039407/1
and by the
Leverhulme Grant RPG-2014-02.
No new data was collected or generated during the course of the research.}
\thanks{The second author was supported in parts by 
the JSPS KAKENHI 26287022 and 26610021.}
\keywords{Dispersive equations, smoothing estimates, canonical transformation}
\date{\today}
\begin{document}
%%% ----------------------------------------------------------------------

\begin{abstract}
This paper 
describes an approach to global smoothing problems
for non-dispersive equations
based on ideas of comparison principle and canonical transformation
established in authors' previous paper \cite{RS4},
where dispersive equations were treated.
For operators $a(D_x)$ of order $m$
satisfying the dispersiveness condition $\nabla a(\xi)\neq0$ for $\xi\not=0$,
the global smoothing estimate
\begin{equation*}
\n{\jp{x}^{-s}|D_x|^{(m-1)/2}e^{ita(D_x)}
\varphi(x)}_{L^2\p{\R_t\times\R^n_x}}
\leq C\n{\varphi}_{L^2\p{\R^n_x}} 
\quad {\rm(}s>1/2{\rm)}
\end{equation*}
is well-known,
while it is also known to fail for non-dispersive operators.
For the case when the dispersiveness breaks, we suggest the estimate 
in the form
\begin{equation*}
\n{\jp{x}^{-s}|\nabla a(D_x)|^{1/2}
e^{it a(D_x)}\varphi(x)}_{L^2\p{\R_t\times\R^n_x}}
\leq C\n{\varphi}_{L^2\p{\R^n_x}} \quad{\rm(}s>1/2{\rm)}
\end{equation*}
which is equivalent to the usual estimate in the dispersive case and
is also invariant under canonical transformations
for the operator $a(D_x)$.
We show that this estimate and its variants do continue to hold for a variety of non-dispersive
operators $a(D_x)$, where $\nabla a(\xi)$ may become zero on some set.
Moreover, other types of such estimates, and the case of time-dependent equations are also discussed.
\end{abstract}

%%% ----------------------------------------------------------------------
\maketitle
%%% ----------------------------------------------------------------------

%\tableofcontents

\section{Introduction}
Various kinds of smoothing estimates
for the solutions $u(t,x)=e^{ita(D_x)}\varphi(x)$ to 
equations of general form
\begin{equation}\label{EQ:Disp}
\left\{
\begin{aligned}
\p{i\partial_t+a(D_x)}\,u(t,x)&=0\quad\text{in $\R_t\times\R^n_x$},\\
u(0,x)&=\varphi(x)\quad\text{in $\R^n_x$},
\end{aligned}
\right.
\end{equation}
where $a(D_x)$ is the corresponding Fourier multiplier to a real-valued
function $a(\xi)$,
have been extensively
studied under the ellipticity ($a(\xi)\not=0$)
or the dispersiveness ($\nabla a(\xi)\not=0$) conditions,
exclusive of the origin $\xi=0$ (i.e. for $\xi\not=0$) when $a(\xi)$ is a homogeneous function.
Such conditions include Schr\"odinger equation as a special case
($a(\xi)=|\xi|^2$).
Since Kato's local gain of one derivative for the linearised KdV in \cite{Ka1}, and 
the independent works
by Ben-Artzi and Devinatz \cite{BD2}, Constantin and Saut \cite{CS},
Sj\"olin \cite{Sj} and Vega \cite{V}, 
the local, and then global smoothing estimates,
together with their application to non-linear problem
have been intensively investigated in a series of papers such
as \cite{BK}, \cite{BN}, \cite{Ch1}, \cite{Ho1}, \cite{Ho2},
\cite{KPV1}, \cite{KPV2},
\cite{KPV3}, \cite{KPV4}, \cite{KPV5}, \cite{KPV6}, \cite{KY}, \cite{LP}
\cite{RS1}, \cite{RS2}, \cite{RS3}, \cite{RS4}, \cite{Si}
\cite{Su1}, \cite{Su2}, \cite{W}, \cite{Wa1} \cite{Wa2}, to mention a few.
\par
\medskip
Among them, a comprehensive analysis is presented
in our previous paper \cite{RS4} for global
smoothing estimates by using two useful methods, that is, 
{\it canonical transformations} and the the {\it comparison principle}.
Canonical transformations are a tool to transform one equation to another
at the estimate level, and the comparison principle is a tool to
relate differential estimates for solutions to different equations.
These two methods work very effectively under the dispersiveness
conditions to induce a number of new or refined global smoothing estimates,
as well as many equivalences between them.
Using these methods, the proofs of smoothing estimates are also considerably simplified.

\medskip
The objective of this paper is to continue the investigation
by the same approach in the case when the dispersiveness breaks down.
We will conjecture what we may call an `invariant estimate' extending the smoothing
estimates to the non-dispersive case. Such an estimate yields the known smoothing
estimates in dispersive cases, it is invariant under canonical transforms of the problem,
and we will show its validity for a number of non-dispersive evolution equations of
several different types.

%The results thus obtained (in Section \ref{SECTION:nondisp}) are partly
%announced in \cite{RS5}.
\medskip
The most typical example of a global smoothing estimate is of the form
\begin{equation}\label{EQ:main1-0}
\n{\jp{x}^{-s}|D_x|^{(m-1)/2}e^{ita(D_x)}
\varphi(x)}_{L^2\p{\R_t\times\R^n_x}}
\leq C\n{\varphi}_{L^2\p{\R^n_x}}  \qquad (s>1/2),
\end{equation}
where $m$ denotes the order of the operator $a(D_x)$,
and this estimate, together with other similar kind of
global smoothing estimates, has been already justified
under appropriate dispersiveness assumptions
(see Section \ref{subsection2.1}).
Throughout this paper we use the standard notation
\[
\jp{x}=\p{1+|x|^2}^{1/2} \quad\textrm{and}\quad
\jp{D_x}=\p{1-\Delta_x}^{1/2}.
\]
Note that the $L^2$-norm of the solution is always the same
as that of the Cauchy data $\varphi$ for any fixed time $t$, but
estimate \eqref{EQ:main1-0} means that the extra gain of
regularity of order $(m-1)/2$ in the spacial variable $x$ can
be observed if we integrate the solution $e^{it a(D_x)}\varphi(x)$
to equation \eqref{EQ:Disp} in the time variable $t$.
One interesting conclusion in \cite{RS4} is that
our method allowed us to carry out a global
microlocal reduction of estimate \eqref{EQ:main1-0}
to the translation invariance of the Lebesgue measure.

\medskip
On the other hand, despite their natural appearance
in many problems, quite limited results are available for
non-dispersive equations while the dispersiveness condition was shown
to be necessary for most common types of global smoothing
estimates (see \cite{Ho2}).
To give an example, coupled dispersive equations are of high importance in 
applications while only limited analysis is available.
Let $v(t,x)$ and $w(t,x)$
solve the following coupled system of Schr\"odinger equations:
\begin{equation}\label{eq2}
\left\{
\begin{aligned} 
i\partial_t v & =\Delta_x v+b(D_x)w, \\
i\partial_t w & =\Delta_x w+c(D_x)v,  \\
v(0,x) & =v_0(x), w(0,x)=w_0(x). 
\end{aligned}
\right.
\end{equation}
This is the simplest example of Schr\"odinger equations coupled
through linearised operators $b(D_x), c(D_x)$. 
Such equations appear
in many areas in physics. For example, this is a model of wave
packets with two modes (in the presence of resonances),
see Tan and Boyd \cite{TB}.
In fibre optics they appear to describe certain 
types of a pair 
of coupled modulated wave-trains (see e.g.
Manganaro and Parker \cite{MP}).
They also describe the field of optical solitons in fibres
(see Zen and Elim \cite{ZE}) as well as Kerr dispersion and stimulated
Raman scattering for ultrashort pulses transmitted through fibres.
In these cases the linearised operators $b$ and $c$ 
would be of
zero order. In models of optical pulse propagation of birefringent
fibres and in wavelength-division-multiplexed systems they are
of the first order (see Pelinovsky and Yang \cite{PY}). 
They may be of higher orders as well, for example
in models of optical solitons with higher order effects 
(see Nakkeeran \cite{Na}).

\medskip
Suppose now that we are in the simplest 
situation when system 
\eqref{eq2} can be diagonalised. Its eigenvalues are 
$a_\pm(\xi)=-|\xi|^2\pm\sqrt{b(\xi)c(\xi)}$ and the system uncouples
into scalar equations of type \eqref{EQ:Disp}
with operators $a(D_x)=a_\pm(D_x)$.
Since the structure of operators $b(D_x), c(D_x)$ 
may be quite involved,
this motivates the study of scalar equations 
\eqref{EQ:Disp} 
with 
operators $a(D_x)$ of rather general form. Not only the presence
of lower order terms is important in time global problems, 
the principal part may be
rather general since we may have $\nabla a_\pm=0$ 
at some points.
%In such situation we microlocalise around such points and
%lose the structure (but not the properties) of the symbol
%completely.

\medskip
Let us also briefly mention another concrete example.
Equations of the third order often appear in applications to
water wave equations. For example, the Shrira
equation \cite{Sh} describing the propagation of a 
three-dimensional packet of weakly nonlinear internal gravity 
waves leads to third order polynomials in two dimensions.
The same types of third order polynomials in two variables also
appear in the Dysthe equation as well as in the Hogan equation,
both describing the behaviour of deep water waves in 2-dimensions.
Strichartz estimates for the corresponding solutions have 
been analysed by e.g. Ghidaglia and Saut \cite{GS} and by
Ben-Artzi, Koch and Saut \cite{BKS} by reducing the equations
to pointwise estimates for operators in normal forms.
In general, by linear changes of variables, polynomials $a(\xi_1,\xi_2)$ of
order $3$
are reduced to one of the following normal forms:
\begin{align}\nonumber
&%[\,\,{\rm I}\,\,]\qquad
\xi_1^3,\quad  \xi_1^3+\xi_2^3,\quad \xi_1^3-\xi_1\xi_2^2,
\\ \label{EQ:examples2}
&%[\,{\rm II}\,]\qquad
\xi_1^3+\xi_2^2,\quad \xi_1\xi_2^2, \quad\xi_1\xi_2^2+\xi_1^2,
\\
&%[{\rm III}]\qquad
\xi_1^3+\xi_1\xi_2,\quad \xi_1^3+\xi_2^3+\xi_1\xi_2,
\quad \xi_1^3-3\xi_1\xi_2^2+\xi_1^2+\xi_2^2,
\nonumber
\end{align}
modulo polynomials of order one. 
Strichartz estimates have been obtained for operators having their
symbols in this list except
for the cases $a(\xi_1,\xi_2)=\xi_1^3$ and $\xi_1\xi_2^2$,
see Ben-Artzi, Koch and Saut \cite{BKS}.
Some of normal forms listed here
satisfy dispersiveness assumptions and can be discussed
by existing results listed in Section \ref{subsection2.1}.
Indeed, $a(\xi_1,\xi_2)=\xi_1^3+\xi_2^3$ and
$\xi_1^3-\xi_1\xi_2^2$ are homogeneous and satisfy
the dispersiveness $\nabla a(\xi_1,\xi_2)\neq0$ except for the origin
(which is assumption (H) in Section \ref{subsection2.1}).
However the dispersiveness assumption is sensitive to the perturbation
by polynomials of order one. 
For example, $a(\xi_1,\xi_2)=\xi_1^3+\xi_2^3+\xi_1$ still satisfies
the dispersiveness $\nabla a(\xi_1,\xi_2)\neq0$ everywhere
(see assumption (L) in Section \ref{subsection2.1}),
but it breaks for $a(\xi_1,\xi_2)=\xi_1^3+\xi_2^3-\xi_1$.
Furthermore the other normal forms
do not satisfy neither of these assumptions, with the corresponding
equation losing dispersiveness at some points
(see Examples \ref{nondispersive1} and \ref{ex:isolated-critical}).

\medskip

We now turn to describing an estimate which holds in such cases even 
when the dispersiveness fails at some points in the phase space.
In this paper, based on the methods of comparison principle and 
canonical transforms, we develop several approaches to getting 
smoothing estimates in such non-dispersive cases.
Since standard global smoothing estimates are known to fail in
non-dispersive cases by \cite{Ho2}, 
we will suggest an invariant form of global smoothing estimates
instead, for the analysis, and call them {\it invariant estimates},
which we expect to continue to hold even in non-dispersive cases.
As an example of it,
estimate \eqref{EQ:main1-0}
may be rewritten in the form
\begin{equation}\label{EQ:main-invariant}
\n{\jp{x}^{-s}|\nabla a(D_x)|^{1/2}
e^{it a(D_x)}\varphi(x)}_{L^2\p{\R_t\times\R^n_x}}
\leq C\n{\varphi}_{L^2\p{\R^n_x}} \qquad (s>1/2).
\end{equation}
Indeed the normal form $a(\xi_1,\xi_2)=\xi_1^3$ listed in \eqref {EQ:examples2}
is known to satisfy this estimate (see estimate \eqref{model:1}
in Corollary \ref{Th:typeI}).
Other types of invariant estimates are also suggested in Section
\ref{subsection2.2}.
Such estimate has a number of advantages:
\begin{itemize}
\item in the dispersive case it is equivalent to the usual
estimate \eqref{EQ:main1-0};
\item it does continue to hold for a variety of non-dispersive
equations, where $\nabla a(\xi)$ may become zero on some set
and when \eqref{EQ:main1-0} fails;
\item it does take into account possible zeros of the gradient
$\nabla a(\xi)$ in the non-dispersive case,
which is also responsible for the interface
between dispersive and non-dispersive zone (e.g. how quickly
the gradient vanishes);
\item it is invariant under canonical transformations of the equation.
%\item the proposed estimates are scaling 
%invariant or ``almost'' invariant;
\end{itemize}

\medskip
We will also try to justify invariant estimates for non-dispersive equations.
The combination of the proposed two methods
(canonical transformations and the comparison principles)
has a good power again on the occasion of this analysis.
Besides the simplification of the proofs of global smoothing estimates
for standard dispersive equations,
we have the following advantage in treating non-dispersive equations
where the dispersiveness condition $\nabla a(\xi)\neq 0$ breaks:
\begin{itemize}
\item in radially symmetric cases, we can use the comparison
principle of radially symmetric type (Theorem \ref{Th:nondisprad});
\item in polynomial cases
we can use the comparison principle of one dimensional type
(Theorem \ref{polynomial});
\item in the homogeneous case with some information on the
Hessian, we can use canonical transformation to reduce the
general case to some well-known model situations 
(Theorem \ref{th1});
\item around non-dispersive points where the Hessian is non-degenerate,
we can microlocalise and apply the canonical transformation based on
the Morse lemma (Theorem \ref{THM:isolated-critical});
\end{itemize}

\medskip
In particular, in the radially symmetric cases,
we will see that estimate \eqref{EQ:main-invariant} is
valid in a generic situation (Theorem \ref{Th:nondisprad}).
And as another remarkable result, it is also valid for
any differential operators with real constant coefficients,
including operators corresponding to normal forms listed
in \eqref {EQ:examples2} with
perturbation by polynomials of order one (Theorem \ref{polynomial}).
Some normal forms are also covered by Theorem \ref{THM:isolated-critical}.

\medskip 
In addition, we will derive estimates for equations with time
dependent coefficients. In general, the dispersive estimates
for equations with time dependent coefficients may be a
delicate problem, with decay rates heavily depending on the
oscillation in coefficients (for a survey of different
results for the wave equation with lower order terms see,
e.g. Reissig \cite{Rei}; for more general equations and systems and
the geometric analysis of the time-decay rate of their solution
see \cite{RW} or \cite{CUFRW}). However, we will show in Section
\ref{SECTION:time-dependent} that the smoothing estimates still
remain valid if we introduce an appropriate factor into the
estimate. Such estimates become a natural extension of the
invariant estimates to the time dependent setting.

\medskip 
We will explain the organisation of this paper.
In Section \ref{SECTION:invariant}, we list
typical global smoothing estimates for dispersive equations,
and then discuss their invariant form which we expect to
remain true also in non-dispersive situations.
In Section \ref{SECTION:nondisp},
we establish invariant estimates for several types of
non-dispersive equations.
The case of time--dependent coefficients will be treated
in Section \ref{SECTION:time-dependent}.
In Appendix \ref{SECTION:tools},
we review our fundamental tools, that is, the canonical transformation
and the comparison principle, which is used for
the analysis in Section \ref{SECTION:nondisp}.

\medskip 
Finally we comment on the notation used in this paper.
When we need to specify the entries of the vectors $x,\xi\in\R^n$,
we write $x=(x_1,x_2,\ldots,x_n)$, $\xi=(\xi_1,\xi_2,\ldots,\xi_n)$
without any notification.
As usual we will denote
$\nabla=(\partial_1,\ldots,\partial_n)$ where
$\partial_j=\partial_{x_j}$,
$D_x=(D_1,D_2\ldots,D_n)$ where
$D_{j}=-\sqrt{-1}\,\partial_{j}$ ($j=1,2,\ldots,n$),
and
view operators $a(D_x)$ as Fourier multipliers.
We denote the set of the positive real numbers $(0,\infty)$
by $\R_+$.
Constants denoted by letter $C$ 
in estimates are always positive and
may differ on different 
occasions, but will still be denoted by the same letter.

%\newpage
\section{Invariant smoothing estimates for dispersive equations}
\label{SECTION:invariant}

In this section we collect known smoothing estimates for dispersive equations
under several different assumptions.
Then we show that the invariant estimate \eqref{EQ:main-invariant} holds
in these cases and is, in fact, equivalent to the known estimates.
Thus, let us consider the solution
\[
u(t,x)=e^{ita(D_x)}\varphi(x)
\]
to the equation
\[
\left\{
\begin{aligned}
\p{i\partial_t+a(D_x)}\,u(t,x)&=0\quad\text{in $\R_t\times\R^n_x$},\\
u(0,x)&=\varphi(x)\quad\text{in $\R^n_x$},
\end{aligned}
\right.
\]
where we always assume that 
function $a(\xi)$ is real-valued.
We sometimes decompose the initial data $\varphi$ into the sum of
the {\it low frequency} part
$\varphi_{l}$ and the {\it high frequency} part $\varphi_{h}$, where
$\supp\widehat{\varphi_l}\subset\b{\xi:|\xi|< 2R}$
and $\supp\widehat{\varphi_h}\subset\b{\xi:|\xi|> R}$
with sufficiently large $R>0$.
First we review a selection of known results on global smoothing estimates
established in \cite{RS4}
when the dispersiveness assumption $\nabla a(\xi)\neq0$  for $\xi\neq 0$ is satisfied,
and then rewrite them in a form which is expected to hold even in 
non-dispersive situations.
\medspace

\subsection{Smoothing estimates for dispersive equations}
\label{subsection2.1}

First we collect known results for dispersive equations.
Let $a_m=a_m(\xi)\in C^\infty(\R^n\setminus0)$,
the {\it principal} part of $a(\xi)$, be 
a positively homogeneous function
of order $m$, that is, satisfy
$$a_m(\lambda\xi)=\lambda^m a_m(\xi) \textrm{ for all } \lambda>0 \textrm{ and } \xi\neq0.$$
First we consider the case that $a(\xi)$
has no lower order terms,
and assume that $a(\xi)$ is {\it dispersive}:
%\medskip
\begin{equation}\tag{{\bf H}}
a(\xi)=a_m(\xi),\qquad\nabla a_m(\xi)\neq0 \quad(\xi\in\R^n\setminus0).
\end{equation}
\par\noindent

\par
\medskip 
\par
\begin{thm}\label{M:H1}
Assume {\rm{(H)}}. Then there exists a constant $C>0$ such that the following estimates hold true.
\begin{itemize}
\item Suppose $n\geq 1$, $m>0$, and $s>1/2$.
Then we have
\begin{equation}\label{EQ:main1}
\n{\jp{x}^{-s}|D_x|^{(m-1)/2}e^{ita(D_x)}\varphi(x)}_{L^2\p{\R_t\times\R^n_x}}
\leq C\n{\varphi}_{L^2\p{\R^n_x}}.
\end{equation}
\item Suppose $m>0$ and $(m-n+1)/2<\alpha<(m-1)/2$.
Or, in the elliptic case $a(\xi)\neq0$
$(\xi\neq0)$, suppose $m>0$ and $(m-n)/2<\alpha<(m-1)/2$.
Then we have
\begin{equation}\label{EQ:main5}
\n{\abs{x}^{\alpha-m/2}|D_x|^{\alpha}e^{ita(D_x)}\varphi(x)}_
{L^2\p{\R_t\times\R^n_x}}
\leq C\n{\varphi}_{L^2\p{\R^n_x}}.
\end{equation}
\item Suppose $n-1>m>1$, but in the elliptic case $a(\xi)\neq0$
$(\xi\neq0)$ suppose $n>m>1$. Then we have
\begin{equation}\label{EQ:main3}
\n{\jp{x}^{-m/2}\jp{D_x}^{(m-1)/2}e^{ita(D_x)}\varphi(x)}_
{L^2\p{\R_t\times\R^n_x}}
\leq C\n{\varphi}_{L^2\p{\R^n_x}}.
\end{equation}
\end{itemize}
\end{thm}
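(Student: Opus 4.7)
The plan is to apply the canonical transformation and comparison principle methods of \cite{RS4}. We decompose $\varphi = \varphi_l + \varphi_h$ with $\widehat{\varphi_h}$ supported in $\b{|\xi| > R}$, and then microlocalize $\varphi_h$ through a conic partition of unity $\{\chi_k(\xi/|\xi|)\}$ on the sphere, producing pieces whose Fourier supports lie in narrow cones around directions $\omega_k\in\Sph^{n-1}$. Assumption (H) guarantees that on each cone the symbol $a_m$ is smooth with non-vanishing gradient, so a canonical transformation brings $a$ microlocally into a model form: for \eqref{EQ:main1} and \eqref{EQ:main3} the natural model is the (weighted) one-dimensional transport operator $D_1$, while for \eqref{EQ:main5} it is the radial operator $|D_x|^m$.

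For the transport-type model, the solution is a pure translation in $x_1$, and the weighted $L^2_{t,x}$ norm collapses, after the substitution $y_1 = x_1 + t$, to
\begin{equation*}
\int_{\Rn}|\varphi(y_1,x')|^2 \p{\int_\R \p{1+x_1^2+|x'|^2}^{-s}\,dx_1}\,dy_1\,dx'
\leq C\n{\varphi}_{\Lx}^2,
\end{equation*}
the inner integral being $\sim\jp{x'}^{1-2s}$ and hence bounded precisely when $s > 1/2$. This is the ``microlocal reduction to the translation invariance of the Lebesgue measure'' announced in the introduction. Summing the conic pieces by almost orthogonality and then undoing the canonical transformation gives \eqref{EQ:main1} for $\varphi_h$. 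The low-frequency part $\varphi_l$ is absorbed separately, since on a bounded frequency annulus $|D_x|^{(m-1)/2}$ is $L^2$-bounded and the same conic/transport argument applied on such an annulus produces the required bound.

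For \eqref{EQ:main5} the comparison principle reduces matters to the radial model $a(\xi)=|\xi|^m$, where the required bound is an Agmon--H\"ormander resolvent estimate provable via weighted $L^2$ (Stein--Weiss) inequalities on the sphere. The allowed range of $\alpha$ reflects the integrability of these weighted pairings; in the elliptic case $a(\xi)\neq 0$ the origin is no degeneracy and the lower bound on $\alpha$ relaxes from $(m-n+1)/2$ to $(m-n)/2$. Estimate \eqref{EQ:main3} then follows from \eqref{EQ:main5} by taking $\alpha = (m-1)/2$ and dominating the homogeneous weight $|x|^{-1/2}$ by $\jp{x}^{-m/2}$ on the high-frequency part (which requires $m>1$), together with the usual direct bound on $\varphi_l$.

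The main obstacle is the bookkeeping under the canonical transformation: while it is an $L^2(\Rn)$-isometry at each $t$, it distorts the weight $\jp{x}^{-s}$ and the multiplier $|D_x|^{(m-1)/2}$ in a way that must be matched against the change in the order of the symbol from $m$ down to $1$ in the transport reduction. Guaranteeing that the distorted weight and multiplier define an operator bounded (up to harmless errors) on the image of the conic cutoff is precisely where the comparison principle of \cite{RS4} enters, and its careful application --- together with the verification that the reduced model bound really is equivalent to the original --- is the crux of the proof.
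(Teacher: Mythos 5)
Your overall strategy for \eqref{EQ:main1} --- conic microlocalisation, canonical transformation to a model evolution, and reduction to the translation invariance of the Lebesgue measure via the substitution $y_1=x_1+t$ --- is exactly the route of the paper (which delegates the details to \cite{RS4}, reducing \eqref{EQ:main1} to the model estimates \eqref{model:1} and \eqref{model:2} of Corollary \ref{Th:typeI}). Two points, however, are genuinely problematic.

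First, your derivation of \eqref{EQ:main3} does not work as stated: you propose to take $\alpha=(m-1)/2$ in \eqref{EQ:main5}, but that value is excluded --- the hypothesis of \eqref{EQ:main5} is the \emph{strict} inequality $\alpha<(m-1)/2$, and the endpoint is a Kato--Yajima type borderline that is not available. This also leaves the dimensional restrictions $n-1>m>1$ (resp.\ $n>m>1$ in the elliptic case) unexplained by your argument. The paper obtains \eqref{EQ:main3} differently: split $\varphi=\varphi_l+\varphi_h$; for the high frequencies use \eqref{EQ:main1} with $s=m/2$ (note $\jp{D_x}^{(m-1)/2}(1+|D_x|^{(m-1)/2})^{-1}$ is $L^2$-bounded), and for the low frequencies use \eqref{EQ:main5} with $\alpha=0$. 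The conditions $n-1>m>1$ (resp.\ $n>m>1$) are precisely what is needed for $\alpha=0$ to lie in the admissible range $(m-n+1)/2<0<(m-1)/2$ (resp.\ $(m-n)/2<0<(m-1)/2$).

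Second, for \eqref{EQ:main5} your claim that ``the comparison principle reduces matters to the radial model $a(\xi)=|\xi|^m$'' is only tenable in the elliptic case: assumption (H) does not require ellipticity, and the radial comparison principle (Theorem \ref{prop:dim1eqmod}) cannot be applied to a non-elliptic homogeneous symbol. This is why the theorem has two distinct ranges of $\alpha$, corresponding to two distinct model operators (an elliptic one and a non-elliptic one of the type $\xi_1|\xi_n|^{m-1}$), and the Agmon--H\"ormander/Stein--Weiss route you allude to would have to be carried out for the non-elliptic normal form as well; as written this part of your argument is a placeholder rather than a proof. The low-frequency remark that ``$|D_x|^{(m-1)/2}$ is $L^2$-bounded on a bounded annulus'' also needs care: it is false for $0<m<1$ near $\xi=0$, and in any case $L^2$-boundedness alone never yields a time-global bound; one must run the homogeneous smoothing argument at all frequencies, which is what homogeneity of $a=a_m$ under (H) permits.
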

\par
\medskip 
\par
In the Schr\"odinger equation case $a(\xi)=|\xi|^2$ and for $n\geq 3$,
estimate \eqref{EQ:main1} was obtained by 
Ben-Artzi and Klainerman \cite{BK}.
It follows also from a sharp local smoothing estimate by
Kenig, Ponce and Vega \cite[Theorem 4.1]{KPV1}), and also from the one by
Chihara \cite{Ch1} who treated the case $m>1$. 
For the range $m>0$ and any $n\geq 1$ it was obtained in 
\cite[Theorem 5.1]{RS4}.

Compared to \eqref{EQ:main1}, the estimate \eqref{EQ:main5}  
is scaling invariant with the homogeneous weights
$|x|^{-s}$ instead of non-homogenous ones $\jp{x}^{-s}$.
The estimate \eqref{EQ:main5}  was obtained in \cite[Theorem 5.2]{RS4},
and it is a generalisation of the result by
Kato and Yajima \cite{KY} who treated the case $a(\xi)=|\xi|^2$
with $n\geq 3$ and $0\leq \alpha<1/2$, or with $n=2$ and $0<\alpha<1/2$,
and also of the one by Sugimoto \cite{Su1} who treated elliptic $a(\xi)$ 
of order $m=2$ with $n\geq 2$ and
$1-n/2<\alpha<1/2$. 

The smoothing estimate \eqref{EQ:main3} is of yet another type replacing
$|D_x|^{(m-1)/2}$ by its non-homogeneous version
$\jp{D_x}^{(m-1)/2}$, obtained in \cite[Corollary 5.3]{RS4}.
It is a direct consequence of 
\eqref{EQ:main1} with $s=m/2$ and \eqref{EQ:main5} with
$\alpha=0$ (note also the $L^2$--boundedness of
$\jp{D_x}^{(m-1)/2}(1+|D_x|^{(m-1)/2})^{-1}$),
and it also extends the result by Kato and Yajima \cite{KY}
who treated the case $a(\xi)=|\xi|^2$ and $n\geq 3$,
the one by Walther \cite{Wa2} who treated the 
case $a(\xi)=|\xi|^m$, and the one by the authors \cite{RS1}
who treated the elliptic case with $m=2$.

\medskip
We can also consider
the case that $a(\xi)$ has lower order terms,
and assume that $a(\xi)$ is dispersive in the following sense:

\begin{equation}\tag{{\bf L}}
\begin{aligned}
&a(\xi)\in C^\infty(\R^n),\qquad \nabla a(\xi)\neq0 \quad(\xi\in\R^n),
\quad\nabla a_m(\xi)\neq0 \quad (\xi\in\R^n\setminus0), 
\\
&|\partial^\alpha\p{a(\xi)-a_m(\xi)}|\leq C_\alpha\abs{\xi}^{m-1-|\alpha|}
\quad\text{for all multi-indices $\alpha$ and all $|\xi|\geq1$}.
\end{aligned}
\end{equation}
\medskip
\par\noindent
Condition (L) may be formulated equivalently in the following way
\medskip
\begin{equation}\tag{{\bf L}}
\begin{aligned}
&a(\xi)\in C^\infty(\R^n),\qquad 
|\nabla a(\xi)|\geq C\jp{\xi}^{m-1}\quad(\xi\in\R^n)\quad
\textrm{for some}\; C>0,
\\
&|\partial^\alpha\p{a(\xi)-a_m(\xi)}|\leq C_\alpha\abs{\xi}^{m-1-|\alpha|}
\quad\text{for all multi-indices $\alpha$ and all $|\xi|\geq1$}.
\end{aligned}
\end{equation}
The last line of these assumptions simply amount to saying that
the principal part $a_m$ of $a$ is positively homogeneous
of order $m$ for $|\xi|\geq 1$.
Then we have the following estimates:
\par
\medskip 
\par
\begin{thm}\label{M:L4}
Assume {\rm (L)}.
Then there exists a constant $C>0$ such that the following estimates hold true.
\begin{itemize}
\item 
Suppose $n\geq1$, $m>0$, and $s>1/2$.
Then we have
\begin{equation}\label{EQ:main4}
\n{\jp{x}^{-s}\jp{D_x}^{(m-1)/2}e^{ita(D_x)}\varphi(x)}
_{L^2\p{\R_t\times\R^n_x}}
\leq C\n{\varphi}_{L^2\p{\R^n_x}}.
\end{equation}
\item Suppose $n\geq1$, $m\geq1$ and $s>1/2$.
Then we have
\begin{equation}\label{EQ:main9}
\n{\jp{x}^{-s}|D_x|^{(m-1)/2}e^{ita(D_x)}\varphi(x)}_{L^2\p{\R_t\times\R^n_x}}
\leq C\n{\varphi}_{L^2\p{\R^n_x}}.
\end{equation}
\end{itemize}
\end{thm}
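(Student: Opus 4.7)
The plan is to reduce the estimates of Theorem \ref{M:L4} to the corresponding estimates of Theorem \ref{M:H1} for the principal part $a_m$ through a low--high frequency decomposition of the initial data, $\varphi=\varphi_l+\varphi_h$ with $\supp\widehat{\varphi_h}\subset\{|\xi|>R\}$ and $\supp\widehat{\varphi_l}\subset\{|\xi|<2R\}$ for $R$ large enough, and then to invoke the comparison principle and the canonical transformation machinery of Appendix \ref{SECTION:tools} separately on each frequency regime.

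On the high frequency part $\varphi_h$ the two estimates \eqref{EQ:main4} and \eqref{EQ:main9} coincide, since $\jp{\xi}^{(m-1)/2}\sim|\xi|^{(m-1)/2}$ uniformly for $|\xi|\geq R$. Assumption {\rm (L)} yields
\begin{equation*}
\nabla a(\xi) = \nabla a_m(\xi) + O\p{|\xi|^{m-2}},\qquad |\nabla a_m(\xi)|\sim |\xi|^{m-1},
\end{equation*}
so that $|\nabla a(\xi)|\sim\jp{\xi}^{m-1}$ on $|\xi|\geq R$ after enlarging $R$ if necessary. This pointwise equivalence of the trace densities along level sets of $a$ and $a_m$ is precisely the hypothesis of the comparison principle, and it transfers the homogeneous estimate \eqref{EQ:main1} for $a_m$ from Theorem \ref{M:H1} to the full operator $a(D_x)$ on high frequencies.

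For the low frequency part $\varphi_l$, the Fourier multiplier $\jp{D_x}^{(m-1)/2}$ is $L^2$--bounded on functions with Fourier support in $\{|\xi|<2R\}$ for every $m>0$, while $|D_x|^{(m-1)/2}$ is likewise bounded precisely when $m\geq 1$; this is where the threshold in \eqref{EQ:main9} appears. Both estimates are thus reduced to the unweighted bound
\begin{equation*}
\n{\jp{x}^{-s}e^{ita(D_x)}\varphi_l}_{L^2\p{\R_t\times\R^n_x}}\leq C\n{\varphi_l}_{L^2\p{\R^n_x}}.
\end{equation*}
On the bounded window $\{|\xi|<2R\}$ assumption {\rm (L)} gives $|\nabla a(\xi)|\geq C>0$, so I would cover this region by finitely many small conic patches, on each of which a canonical change of variables from Appendix \ref{SECTION:tools} straightens the level sets of $a$ and reduces the corresponding microlocalised piece of $e^{ita(D_x)}\varphi_l$ to the transport propagator $e^{itD_1}$, for which the desired bound is the classical one-dimensional Kato smoothing estimate. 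Reassembling the patches via a finite partition of unity yields the estimate for $\varphi_l$.

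The main obstacle will be the low frequency step: unlike Theorem \ref{M:H1}, whose proofs exploit homogeneity, here $a$ is a general smooth symbol on a bounded region, and one must ensure that the canonical transformation, together with the $\jp{x}^{-s}$ weight and the localisation cut-offs, can be implemented with constants depending only on the quantitative lower bound of $|\nabla a|$ and on finitely many derivatives of $a$ supplied by {\rm (L)}. Checking that the microlocal patching is compatible with the $L^2_t$-integration is the delicate point; fortunately the comparison principle packages exactly the data we need at the level of estimates rather than at the level of solutions themselves.
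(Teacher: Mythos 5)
The paper itself does not prove Theorem \ref{M:L4}: estimate \eqref{EQ:main4} is simply quoted from \cite[Theorem 5.4]{RS4}, and the only argument supplied is the deduction of \eqref{EQ:main9} from \eqref{EQ:main4} via the $L^2$-boundedness of the multiplier $|D_x|^{(m-1)/2}\jp{D_x}^{-(m-1)/2}$ for $m\geq1$. You instead reconstruct a proof of \eqref{EQ:main4}, and your strategy --- low/high frequency splitting, with the low-frequency part rescued by the full-symbol condition $|\nabla a|\geq C>0$ on $\{|\xi|<2R\}$ via a canonical transformation onto the transport model $e^{itD_1}$, and the high-frequency part treated as a lower-order perturbation of the homogeneous principal symbol --- is essentially the route taken in the cited reference; your derivation of the $m\geq1$ threshold from the low-frequency boundedness of $|\xi|^{(m-1)/2}$ is equivalent to the paper's one-line deduction of \eqref{EQ:main9}. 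Two points should be sharpened. First, the equivalence $|\nabla a(\xi)|\sim|\nabla a_m(\xi)|\sim|\xi|^{m-1}$ on $|\xi|\geq R$ is \emph{not} by itself the hypothesis of the comparison principle: Theorem \ref{prop:dimneq} requires strict monotonicity of both symbols in a fixed coordinate direction on the support of the cut-off, and its quantitative hypothesis is the componentwise bound $|\partial_1 a_m|\lesssim|\partial_1 a|$, not a bound between full gradients. So you must first microlocalise to narrow cones on which, after a rotation, $|\partial_1 a_m(\xi)|\geq c|\xi|^{m-1}$; condition (L) then gives $|\partial_1 a(\xi)|\geq (c/2)|\xi|^{m-1}$ for $R$ large, and the comparison (together with splitting the $\xi_1$-sections into intervals of monotonicity) goes through. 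Second, in the low-frequency step the patches are bounded neighbourhoods, not conic ones, and the $L^2_{-s}$-boundedness of $I_{\psi,\gamma}$ for $\psi(\xi)=(a(\xi),\xi')$ rests on Theorem \ref{Th:L2k}, which requires extending $\psi$ to a global diffeomorphism with all derivatives of $\partial\psi$ bounded; you rightly flag this as the delicate point. Neither issue is a conceptual gap, so the proposal stands as a correct sketch of the argument the paper delegates to \cite{RS4}.
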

\par
\medskip 
\par
The estimate \eqref{EQ:main4} was established in \cite[Theorem 5.4]{RS4}.
Consequently, \eqref{EQ:main9} is a straightforward consequence
of \eqref{EQ:main4} and the $L^2$-boundedness
of the Fourier multiplier $|D_x|^{(m-1)/2}\jp{D_x}^{-(m-1)/2}$
with $m\geq1$. It is an analogue of \eqref{EQ:main1} for operators $a(D_x)$
with lower order terms, and also a generalisation of \cite[Theorem 4.1]{KPV1}
who treated essentially polynomial symbols $a(\xi)$.
For \eqref{EQ:main9} in its full generality we refer to \cite[Corollary 5.5]{RS4}.

\medskip
Assumption (L) 
requires the condition
$\nabla a(\xi)\neq0$ ($\xi\in\R^n$) for the full symbol, 
besides the same
one $\nabla a_m(\xi)\neq0$ ($\xi\neq0$) for the principal term.
To discuss what happens if we do not have
the condition $\nabla a(\xi)\neq0$, we can
introduce an intermediate assumption between (H) and (L):
\medskip
\begin{equation}\tag{{\bf HL}}
\begin{aligned}
&a(\xi)=a_m(\xi)+r(\xi),\quad
\nabla a_m(\xi)\neq0 \quad (\xi\in\R^n\setminus0),
\quad r(\xi)\in C^\infty(\R^n)
 \\
&|\partial^\alpha r(\xi)|\leq C\jp{\xi}^{m-1-|\alpha|}
\quad\text{for all multi-indices $\alpha$}.
\end{aligned}
\end{equation}
Theorem \ref{M:L4} remain valid if we
replace assumption (H) by (HL) and functions $\varphi(x)$ in 
the estimates by their 
(sufficiently large) high frequency parts $\varphi_h(x)$.
However we cannot control the low frequency parts $\varphi_l(x)$,
and so have only the time local estimates on the whole, which we
now state for future use:
\par
\medskip 
\par
\begin{thm}[{\cite[Theorem 5.6]{RS4}}]\label{Th:HL}
Assume {\rm (HL)}.
Suppose $n\geq1$, $m>0$, $s>1/2$, and $T>0$.
Then we have
\[
\int^T_0\n{\jp{x}^{-s}\jp{D_x}^{(m-1)/2}e^{ia(D_x)}\varphi(x)}
^2_{L^2(\R^n_x)}\,dt
\leq
 C\n{\varphi}_{L^2(\R^n)}^2,
\]
where $C>0$ is a constant depending on $T>0$.
\end{thm}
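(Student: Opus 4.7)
The plan is a standard high/low frequency decomposition, where the time-globality of the high-frequency estimate from Theorem \ref{M:L4} is combined with a trivial time-local bound on the low-frequency piece.

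First, I would fix $R>0$ sufficiently large and decompose $\varphi = \varphi_l + \varphi_h$ with $\supp\widehat{\varphi_l}\subset\b{|\xi|<2R}$ and $\supp\widehat{\varphi_h}\subset\b{|\xi|>R}$, chosen so that the symbol $a$ is already dispersive on $\b{|\xi|>R}$. The justification for this last point uses (HL) directly: since $a_m$ is positively homogeneous of order $m$ with $\nabla a_m(\xi)\neq 0$ on the unit sphere, one has $|\nabla a_m(\xi)|\geq c|\xi|^{m-1}$, while from the symbol bound in (HL) with $|\alpha|=1$, $|\nabla r(\xi)|\leq C\jp{\xi}^{m-2}$. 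Hence for $R$ large, $|\nabla a(\xi)|\geq (c/2)|\xi|^{m-1}$ on $\b{|\xi|>R}$.

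For the high-frequency piece I would then construct an auxiliary symbol $\tilde a(\xi)\in C^\infty(\R^n)$ satisfying assumption (L) globally and agreeing with $a(\xi)$ on $\b{|\xi|>R}$. A concrete choice is $\tilde a(\xi) = \chi(\xi)\,a(\xi) + (1-\chi(\xi))\,\ell(\xi)$, where $\chi$ is a smooth cutoff supported in $\b{|\xi|>R/2}$ and equal to one on $\b{|\xi|>R}$, and $\ell(\xi)$ is an affine function whose gradient is a fixed non-zero vector, chosen large enough to ensure $\nabla\tilde a\neq 0$ in the transition zone. The principal part of $\tilde a$ coincides with $a_m$ for $|\xi|\geq 1$, and the symbol estimate on $\tilde a - a_m$ is straightforward from (HL) and the cutoff. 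Since the multipliers $e^{ita(D_x)}$ and $e^{it\tilde a(D_x)}$ coincide on $\supp\widehat{\varphi_h}$, one has $e^{ita(D_x)}\varphi_h = e^{it\tilde a(D_x)}\varphi_h$, and applying \eqref{EQ:main4} from Theorem \ref{M:L4} to $\tilde a$ yields the time-global bound
\begin{equation*}
\int_0^\infty\n{\jp{x}^{-s}\jp{D_x}^{(m-1)/2}e^{ita(D_x)}\varphi_h(x)}^2_{\Lx}\,dt\leq C\n{\varphi_h}^2_{\Lx}\leq C\n{\varphi}^2_{\Lx},
\end{equation*}
which in particular controls the integral over $[0,T]$.

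For the low-frequency piece, $\widehat{\varphi_l}$ is compactly supported, so the Fourier multiplier $\jp{D_x}^{(m-1)/2}$ acts on $\varphi_l$ as a bounded operator on $\Lx$; moreover $e^{ita(D_x)}$ is an $\Lx$-isometry and commutes with $\jp{D_x}^{(m-1)/2}$, while $\jp{x}^{-s}\leq 1$. Combining these facts gives $\n{\jp{x}^{-s}\jp{D_x}^{(m-1)/2}e^{ita(D_x)}\varphi_l}_{\Lx}\leq C\n{\varphi_l}_{\Lx}\leq C\n{\varphi}_{\Lx}$ uniformly in $t$, and integration over $[0,T]$ produces a factor $T$. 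Adding the two contributions via the triangle inequality completes the argument, with the resulting constant depending on $T$ as stated.

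The only slightly delicate step is the construction of $\tilde a$: one must verify that $\tilde a$ truly satisfies (L) globally, in particular that $\nabla\tilde a$ does not vanish in the transition region $\b{R/2<|\xi|<R}$ where both $\chi\nabla a$ and $(1-\chi)\nabla\ell$ are active; choosing $|\nabla\ell|$ sufficiently large relative to $\sup|\nabla a|$ on this annulus resolves this. Once this is arranged, all remaining steps are routine, and the loss of time-globality is entirely confined to the low-frequency piece, which is precisely the part not controlled by (HL).
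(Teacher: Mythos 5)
Your overall architecture --- split $\varphi=\varphi_l+\varphi_h$, get a time-global bound for $\varphi_h$ from the dispersiveness of $a$ on $\{|\xi|>R\}$, and absorb $\varphi_l$ by the trivial uniform-in-$t$ bound $\n{\jp{x}^{-s}\jp{D_x}^{(m-1)/2}e^{ita(D_x)}\varphi_l}_{L^2}\leq \jp{2R}^{(m-1)/2}\n{\varphi}_{L^2}$ integrated over $[0,T]$ --- is exactly the paper's (the text preceding the theorem says precisely that the (L)-estimates survive under (HL) for high-frequency data, while the low-frequency part is only controlled time-locally). The low-frequency half of your argument is correct.

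The gap is in the step you yourself flag as ``slightly delicate'': a symbol $\tilde a$ satisfying (L) globally and agreeing with $a$ on $\{|\xi|>R\}$ need not exist, and no choice of $\ell$ or of the cutoff can repair this, because the obstruction is topological rather than quantitative. Condition (L) forces $\nabla\tilde a\neq0$ everywhere, so $\nabla\tilde a$ would be a nonvanishing continuous vector field on the closed ball $\overline{B_R}$ extending $\nabla a|_{\partial B_R}$; such an extension exists only if the map $\xi\mapsto\nabla a(\xi)/|\nabla a(\xi)|$ from $\partial B_R$ to $\Sph^{n-1}$ has degree zero. For any elliptic $a_m$ (e.g. $a(\xi)=|\xi|^2+1$, which satisfies (HL)) Euler's identity gives $\xi\cdot\nabla a_m(\xi)=ma_m(\xi)>0$ on $\partial B_R$, so the degree is $1$; already for $n=1$ and $a(\xi)=\xi^2$ the intermediate value theorem forces $\tilde a'$ to vanish in $(-R,R)$. (Separately, your verification of nonvanishing in the transition annulus omits the term $(\nabla\chi)(a-\ell)$, which grows with $|\nabla\ell|$, so ``take $|\nabla\ell|$ large'' would not close even the quantitative part.) The correct route is not to extend $a$ to a global (L)-symbol but to avoid ever needing $\nabla a\neq0$ inside $B_R$: either invoke directly the fact (proved in \cite{RS4} by conic microlocalisation and canonical transformation, and restated in this paper just before the theorem) that the estimates of Theorem \ref{M:L4} hold under (HL) for data with $\supp\widehat{\varphi_h}\subset\{|\xi|>R\}$, or reproduce that argument: decompose $\varphi_h$ further into finitely many pieces with Fourier support in $\{|\xi|>R\}\cap\Gamma_j$ for narrow cones $\Gamma_j$ on which, after rotation, $\partial_1 a(\xi)\geq c\jp{\xi}^{m-1}$; on each such set a modification of $a$ off the support \emph{is} possible (or one applies the comparison/canonical-transform machinery directly), and summing the finitely many pieces gives the time-global high-frequency bound.
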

\par
\medskip 
\par

\subsection{Invariant estimates}
\label{subsection2.2}

Let us now suggest an invariant form of time global smoothing estimates
which might remain valid also in some
areas without dispersion $\nabla a(\xi)\neq0$, where standard
smoothing estimates are known to fail (see Hoshiro \cite{Ho2}).
We can equivalently rewrite estimates above under the dispersiveness
assumption (H) or (L) in the form
\begin{equation}\label{EQ:inv-form}
\n{w(x)\zeta\p{\abs{\nabla a(D_x)}}
e^{it a(D_x)}\varphi(x)}_{L^2\p{\R_t\times\R^n_x}}
\leq C\n{\varphi}_{L^2\p{\R^n_x}} \qquad (s>1/2),
\end{equation}
where $w$ is a weight function of the form
$w(x)=|x|^{\delta}$ or $\jp{x}^{\delta}$,
and the smoothing is given by the function $\zeta$ on $\R_+$ of the form
$\zeta(\rho)=\rho^\eta$ or $\p{1+\rho^2}^{\eta/2}$,
with some $\delta,\eta\in\R$.
For example, we can rewrite estimate \eqref{EQ:main1}
of Theorem \ref{M:H1} as well as estimate \eqref{EQ:main9}
of Theorem \ref{M:L4} for the dispersive equations in the form
\begin{equation}\label{EQ:maininv}
\n{\jp{x}^{-s}|\nabla a(D_x)|^{1/2}
e^{it a(D_x)}\varphi(x)}_{L^2\p{\R_t\times\R^n_x}}
\leq C\n{\varphi}_{L^2\p{\R^n_x}}.
\end{equation}
Similarly we can rewrite estimate \eqref{EQ:main5} of Theorem \ref{M:H1}
in the form
\begin{equation}\label{EQ:maininv2}
\n{\abs{x}^{\alpha-m/2}|\nabla a(D_x)|^{\alpha/(m-1)}e^{ita(D_x)}\varphi(x)}_
{L^2\p{\R_t\times\R^n_x}}
\leq C\n{\varphi}_{L^2\p{\R^n_x}}
\qquad(m\neq1),
\end{equation}
and estimate \eqref{EQ:main3} of Theorem \ref{M:H1} (with $s=-m/2$)
as well as estimate \eqref{EQ:main4} of Theorem \ref{M:L4} 
(with $s>1/2$)
in the form
\begin{equation}\label{EQ:maininv3}
\n{\jp{x}^{-s}\jp{\nabla a(D_x)}^{1/2}e^{ita(D_x)}\varphi(x)}_
{L^2\p{\R_t\times\R^n_x}}
\leq C\n{\varphi}_{L^2\p{\R^n_x}}.
\end{equation}
Indeed, under assumption (H) we clearly have 
$|\nabla a(\xi)|\geq c|\xi|^{m-1}$, so the equivalence
between estimate \eqref{EQ:maininv} 
and estimate \eqref{EQ:main1} 
in Theorem \ref{M:H1}
follows from the fact that the Fourier multipliers
$|\nabla a(D_x)|^{1/2}|D_x|^{-(m-1)/2}$ and
$|\nabla a(D_x)|^{-1/2}|D_x|^{(m-1)/2}$ are bounded
on $L^2(\Rn)$.
Under assumption (L) the same argument works for large
$|\xi|$, while for small $|\xi|$ both
$\jp{\xi}^{(m-1)/2}$ and $|\nabla a(\xi)|^{1/2}$ are
bounded away from zero.
Thus we have the equivalence between
estimate \eqref{EQ:maininv} and estimate \eqref{EQ:main9} 
in Theorem \ref{M:L4}.
The same is true for the other equivalences.
%\medskip 
As we will see later (Theorem \ref{Th:caninv}),
estimate \eqref{EQ:inv-form},
and hence estimates \eqref{EQ:maininv}--\eqref{EQ:maininv3} are
invariant under canonical transformations.
On account of it,
we will call estimate \eqref{EQ:inv-form}
an {\it invariant estimate},
and indeed we expect invariant estimates
\eqref{EQ:maininv}, \eqref{EQ:maininv2},
and \eqref{EQ:maininv3} to hold 
without dispersiveness assumption (H) or (L),
for $s>1/2$, $(m-n)/2<\alpha<(m-1)/2$,
and $s=-m/2$ ($n>m>1$),
respectively in ordinary settings (elliptic case for example),
where $m>0$ is the order of $a(D_x)$.
We will discuss and establish them in Section \ref{SECTION:nondisp}
in a variety of situations.

\medskip

Here is an intuitive understanding of the invariant
estimate \eqref{EQ:maininv} with $s>1/2$ by spectral argument.
Let $E(\lambda)$ be the spectral family of the self-adjoint realisation
of $a(D_x)$ on $L^2(\R^n)$, that is
\[
(E(\lambda) f,g)=\int_{a(\xi)\leq\lambda}
\widehat f(\xi)\overline{\widehat g(\xi)}\,d\xi.
\]
Then its spectral density
$$A(\lambda)=\frac{d}{d\lambda}E(\lambda)
=\frac1{2\pi i}\p{R(\lambda+i0)-R(\lambda-i0)},$$
where $R(\lambda\pm i0)$ denotes the ``limit" of the resolvent
$R(\lambda\pm i\varepsilon)$ as $\varepsilon\searrow0$,
is given by
\[
(A(\lambda) f,g)
=
\frac{d}{d\lambda}(E(\lambda) f,g)=\int_{a(\xi)=\lambda}
\widehat f(\xi)\overline{\widehat g(\xi)}\,\frac{d\xi}{|\nabla a(\xi)|}
\]
whenever $\nabla a(\xi)\not=0$.
Hence we have the identity 
\[
(A(\lambda)  |\nabla a (D_x)|^{1/2}f, |\nabla a (D_x)|^{1/2}g)
=
\int_{a(\xi)=\lambda}
\widehat f(\xi)\overline{\widehat g(\xi)}\,d\xi
\]
which continues to have a meaning even if $\nabla a(\xi)$ may vanishes
(although this is just a formal observation).
On the other hand, the right hand side of this identity with $f=g$
has the uniform estimate
\[
\int_{a(\xi)=\lambda}
\abs{\widehat f(\xi)}^2\,d\xi
\leq C\n{\jp{x}^sf}_{L^2}^2
\]
in $\lambda\in\R$ by the one-dimensional Sobolev embedding.
(It can be justified at least when $\nabla a(\xi)\neq0$.
See \cite[Lemma 1]{Ch2}.)
If once we justify them, we could have the estimate
\begin{equation}\label{supersmooth}
\sup_{\lambda\in\R}|(A(\lambda)H^*f, H^*f)|
\leq C\n{f}_{L^2}^2,
\end{equation}
where $H^*$ is the adjoint operator of $H=\jp{x}^{-s}|\nabla a(D_x)|^{1/2}$.
Estimate \eqref{supersmooth} can be regarded as
the $a(D_x)$-smooth property
\[
\sup_{\mu\notin\R}|([R(\mu)-R(\bar\mu)] H^*f, H^*f)|
\leq C\n{f}_{L^2}^2
\]
which is equivalent to invariant estimate \eqref{EQ:maininv}
(see \cite[Theorem 5.1]{Ka1}).
Or we may proceed as in \cite{BD2} or \cite{BK} to obtain the same conclusion.
In fact, we have
\begin{align*}
\p{He^{ita(D_x)}\varphi(x),\,\psi(t,x)}_{L^2(\R_t\times\R^n_x)}
%&=
%\int^\infty_{-\infty}\p{e^{ita(D_x)}\varphi,\,H^*\psi(t,\cdot)}\,dt
%\\
&=\int^\infty_{-\infty}\int_{-\infty}^\infty e^{it\lambda}
\p{A(\lambda)\varphi,\,H^*\psi(t,\cdot)}\,d\lambda dt
\\
&=\int^\infty_{-\infty}
\p{A(\lambda)\varphi,\,H^*\widetilde\psi(\lambda,\cdot)}\,d\lambda,
\end{align*}
where $\widetilde \psi(\lambda,x)=\int^\infty_{-\infty}e^{-it\lambda}\psi(t,x)\,dt$, and
\begin{align*}
\abs{\p{A(\lambda)\varphi,\,H^*\widetilde\psi(\lambda,\cdot)}}
&\leq \p{A(\lambda)\varphi,\varphi}^{1/2}
\p{A(\lambda)H^*\widetilde\psi(\lambda,\cdot),\,
H^*\widetilde\psi(\lambda,\cdot)}^{1/2}
\\
&\leq C \p{A(\lambda)\varphi,\varphi}^{1/2}
\n{\widetilde\psi(\lambda,\cdot)}_{L^2}
\end{align*}
by estimate \eqref{supersmooth}.
Hence by Schwartz inequality,
Plancherel's theorem, and the fact
$\int^\infty_{-\infty}\p{A(\lambda)\varphi,\varphi}\,d\lambda=\n{\varphi}_{L^2(\R^n_x)}^2$, we have the estimate
\[
\abs{
\p{He^{ita(D_x)}\varphi(x),\,\psi(t,x)}_{L^2(\R_t\times\R^n_x)}}
\leq C\n{\varphi}_{L^2(\R^n_x)}\n{\psi(t,x)}_{L^2(\R_t\times\R^n_x)}
\]
which is again equivalent to invariant estimate \eqref{EQ:maininv}.

\medskip
Let us now show that the invariant
estimate \eqref{EQ:maininv} with $s>1/2$ 
is also a refinement of
another known estimate for non-dispersive equations,
namely of the smoothing estimate obtained by Hoshiro in \cite{Ho1}.
%\medskip
If operator $a(D_x)$ has real-valued 
symbol $a=a(\xi)\in C^1(\Rn)$ 
which is positively homogeneous 
of order $m\geq 1$ and no dispersiveness assumption 
is made, Hoshiro \cite{Ho1} showed the estimate
\begin{equation}\label{EQ:Hoshiro1}
\n{\jp{x}^{-s}\jp{D_x}^{-s}|a(D)|^{1/2} e^{ita(D_x)}
\va(x)}_\L2tx\leq
C\n{\varphi}_\Lx \quad {\rm (}s>1/2{\rm)}.
\end{equation}
But once we prove \eqref{EQ:maininv} with $s>1/2$,
we can have a better estimate
$$\n{\jp{x}^{-s}\jp{D_x}^{-1/2}|a(D)|^{1/2} e^{ita(D_x)}
\va(x)}_\L2tx\leq
C\n{\varphi}_\Lx \quad {\rm (}s>1/2{\rm)}$$
with respect to the number of derivatives.
In fact, using the Euler's identity 
$$m a(\xi)=\xi\cdot\nabla a(\xi),$$
we see that this estimate trivially follows from
$$\n{\jp{x}^{-s}\jp{D_x}^{-1/2}|D_x|^{1/2}
|\nabla a(D)|^{1/2} e^{ita(D_x)}\va(x)}_\L2tx\leq
C\n{\varphi}_\Lx \quad {\rm (}s>1/2{\rm)},$$
which in turn follows from \eqref{EQ:maininv} with $s>1/2$
because the Fourier multiplier operator $\jp{D_x}^{-1/2}|D_x|^{1/2}$ is
$L^2(\Rn)$-bounded. In fact, estimate 
\eqref{EQ:Hoshiro1} holds only because of the homogeneity
of $a$, since in this case by Euler's identity
zeros of $a$ contain
zeros of $\nabla a$. In general, estimate
\eqref{EQ:Hoshiro1} cuts off too much, and therefore
does not reflect the nature of the problem for non-homogeneous
symbols, as \eqref{EQ:maininv} still does.

\medskip
In terms of invariant estimates,
we can also give another explanation to the reason
why we do not have time-global estimate in Theorem \ref{Th:HL}.
The problem is that the symbol of
the smoothing operator $\jp{D_x}^{(m-1)/2}$ does not vanish
where the symbol of $\nabla a(D_x)$ vanishes, as should
be anticipated by the invariant estimate \eqref{EQ:maininv}.
If zeros of $\nabla a(D_x)$ are not taken into account, the
weight should change to the one as in estimate \eqref{EQ:main3}.

%\newpage
\section{Smoothing estimates for non-dispersive equations}
\label{SECTION:nondisp}

Canonical transformations and the comparison principle
recalled for convenience 
in Appendix \ref{SECTION:tools} are still important tools
when we discuss
the smoothing estimates for {\em non-dispersive equations}
\[
\left\{
\begin{aligned}
\p{i\partial_t+a(D_x)}\,u(t,x)&=0\quad\text{in $\R_t\times\R^n_x$},\\
u(0,x)&=\varphi(x)\quad\text{in $\R^n_x$},
\end{aligned}
\right.
\]
where real-valued functions $a(\xi)$ fail to
satisfy dispersive assumption (H) or (L) in Section \ref{subsection2.1}.
However, the secondary comparison tools
stated in Section \ref{SECTION:second_comparison} work very effectively
in such situations.
In Corollary \ref{COR:RStype} for example,
even if we lose the dispersiveness assumption at zeros
of $f^\prime$, the estimate is still valid because 
$\sigma$ must vanish at the same points with the order
determined by condition
$|\sigma(\rho)|\leq A |f^\prime(\rho)|^{1/2}$.
The same is true in other comparison results in
Corollary \ref{COR:dimnex}.
In this section, we will treat
the smoothing estimates of non-dispersive equations 
based on these observations.
\medspace
\par
\subsection{Radially symmetric case}
\label{subsection:radial}
The following result states that we still have
estimate \eqref{EQ:maininv} of invariant form
suggested in Section \ref{subsection2.2}
even for non-dispersive equations
in a general setting of the radially symmetric case.
Remarkably enough, it is a straight forward consequence
of the second comparison method of Corollary \ref{COR:RStype},
and in this sense, it is just an equivalent expression of the translation
invariance of the Lebesgue measure (see Section \ref{SECTION:second_comparison}):
\par
\medskip 
\par
\begin{thm}\label{Th:nondisprad}
Suppose $n\geq 1$ and $s>1/2$.
Let $a(\xi)=f(|\xi|)$, where $f\in C^1(\R_+)$ is real-valued.
Assume that $f'$ has only finitely many zeros.
Then we have
\begin{equation}\label{EQ:inv31}
\n{\jp{x}^{-s}|\nabla a(D_x)|^{1/2}
e^{it a(D_x)}\varphi(x)}_{L^2\p{\R_t\times\R^n_x}}
\leq C\n{\varphi}_{L^2\p{\R^n_x}}.
\end{equation}
\end{thm}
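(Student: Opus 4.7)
\medskip

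The plan is to deduce this directly from the radial second comparison principle stated in the appendix (Corollary \ref{COR:RStype}), exploiting the fact that the gradient factor $|\nabla a(D_x)|^{1/2}$ in the invariant form is precisely tuned to what the comparison tolerates.

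\medskip
First I would reduce the estimate to a one-dimensional radial statement. Since $a(\xi)=f(|\xi|)$, a direct computation gives $\nabla a(\xi)=f'(|\xi|)\,\xi/|\xi|$, hence
\[
|\nabla a(\xi)|=|f'(|\xi|)|,
\]
so the operator $|\nabla a(D_x)|^{1/2}$ is nothing but the radial Fourier multiplier with symbol $|f'(|\xi|)|^{1/2}$. Thus \eqref{EQ:inv31} becomes a radial smoothing estimate for $e^{itf(|D_x|)}$ with the smoothing weight $|f'(|D_x|)|^{1/2}$.

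\medskip
Next I would invoke Corollary \ref{COR:RStype}, which, for radial symbols $f$ and multiplier functions $\sigma(\rho)$ satisfying $|\sigma(\rho)|\leq A\,|f'(\rho)|^{1/2}$ together with the assumption that $f'$ has only finitely many zeros, delivers
\[
\n{\jp{x}^{-s}\sigma(|D_x|)\,e^{itf(|D_x|)}\varphi}_{L^2(\R_t\times\R^n_x)}\leq C\n{\varphi}_{L^2(\R^n_x)}
\qquad(s>1/2).
\]
Taking $\sigma(\rho)=|f'(\rho)|^{1/2}$, the hypothesis $|\sigma(\rho)|\leq A|f'(\rho)|^{1/2}$ holds trivially with $A=1$, and the conclusion is exactly \eqref{EQ:inv31}. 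This is the whole content of the proof: the invariant choice $\sigma=|f'|^{1/2}$ saturates the comparison inequality, so we get the best possible smoothing compatible with the principle.

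\medskip
Conceptually, the assumption that $f'$ has only finitely many zeros is what makes this mechanical. On each open interval on which $f$ is strictly monotone, the change of variables $\lambda=f(\rho)$ has $d\lambda=|f'(\rho)|\,d\rho$, and the weight $|f'(\rho)|^{1/2}$ coming from $\sigma$ combines with the Jacobian so that, after Plancherel in $t$, one reduces to a one-dimensional Sobolev embedding/restriction estimate involving only the translation-invariant Lebesgue measure $d\lambda$ (this is the sense in which the paper describes the result as an equivalent expression of translation invariance). Since there are finitely many such monotonicity intervals, summing the contributions gives the full estimate. The only place where care is required is near zeros of $f'$, but there the factor $|f'(\rho)|^{1/2}$ vanishes to precisely the right order to absorb the degeneracy of the change of variables; this is exactly the gain encoded in the invariant formulation \eqref{EQ:maininv}, and is the reason why \eqref{EQ:maininv} survives where the classical estimate \eqref{EQ:main1-0} fails.

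\medskip
The main (and only) real obstacle is therefore packaged into Corollary \ref{COR:RStype} itself; once that comparison principle is available, Theorem \ref{Th:nondisprad} follows essentially by inspection. No additional dispersiveness hypothesis on $f$ is invoked, which is why this estimate can be viewed as a generic statement holding under the mild finiteness condition on the zero set of $f'$.
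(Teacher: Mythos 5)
Your proposal is correct and follows exactly the paper's own (very short) proof: observe $|\nabla a(\xi)|=|f'(|\xi|)|$, apply Corollary \ref{COR:RStype} with $\sigma(\rho)=|f'(\rho)|^{1/2}$ on each of the finitely many intervals where $f$ is strictly monotone (taking $\chi$ to be the characteristic function of each such interval), and sum. The additional commentary on the change of variables $\lambda=f(\rho)$ is a faithful account of why the comparison principle works but is not needed beyond citing the corollary.
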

\begin{proof}
Noticing $|\nabla a(\xi)|=|f'(|\xi|)|$, use
Corollary \ref{COR:RStype} for $\sigma(\rho)=|f'(\rho)|^{1/2}$
in each interval where $f$ is strictly monotone.
\end{proof}
\par
\medskip 
\par
\begin{example}\label{nondisprad}
As a consequence of Theorem \ref{Th:nondisprad},
we have the estimate of invariant form \eqref{EQ:maininv}
if $a(\xi)$ is a real polynomial of $|\xi|$.
This fact is not a consequence of Theorem \ref{M:H1} or Theorem \ref{M:L4}.
For example, let 
$$a(\xi):=f(|\xi|^2)^2,$$
with $f(\rho)$ being a non-constant real polynomial on $\R$.
The principal part $a_m(\xi)$ of $a(\xi)$
is a power of $|\xi|^2$ multiplied by
a constant, hence it satisfies $\nabla a_m(\xi)\neq0$ ($\xi\neq0$).
If $f(\rho)$ is a homogeneous polynomial,
then $a(\xi)$ satisfies assumption (H) 
and we have estimate \eqref{EQ:maininv} by Theorem \ref{M:H1}.
In the case when $f(\rho)$ is not homogeneous, 
trivially $a(\xi)$ does not satisfy (H).
Furthermore $a(\xi)$ does not always satisfy assumption (L)
either
since
 $$\nabla a(\xi)=4f(|\xi|^2)f'(|\xi|^2)\xi$$ 
vanishes
on the set $|\xi|^2=c$ such that $f(c)=0$ or $f'(c)=0$
as well as at the origin $\xi=0$.
Hence Theorem \ref{M:L4} does not assure the estimate \eqref{EQ:maininv}
for $a(\xi)=(|\xi|^2-1)^2$ (we take $f(\rho)=\rho-1$), for example,
but even in this case, we have the invariant smoothing
estimate \eqref{EQ:inv31} by Theorem \ref{Th:nondisprad}.
\end{example}

\medspace
\par
%\subsection{Non-radially symmetric case}
\subsection{Polynomial case}
\label{subsection:nonradial}
Another remarkable fact is that 
we can obtain invariant estimate \eqref{EQ:maininv}
for all differential equations with real constant coefficients
if we use second comparison method of Corollary \ref{COR:dimnex}
(hence again it is just an equivalent expression of the translation
invariance of the Lebesgue measure):
%Below, we use the notation
%$x=(x_1,\ldots,x_n)$,
%$\xi=(\xi_1,\ldots,\xi_n)$, and $D_x=(D_1,\ldots,D_n)$
\par
\medskip 
\par
\begin{thm}\label{polynomial}
Suppose $n\geq 1$ and $s>1/2$.
Let $a(\xi)$ be a real polynomial.
Then we have
\begin{equation}\label{EQ:inv32}
\n{\jp{x}^{-s}|\nabla a(D_x)|^{1/2}
e^{it a(D_x)}\varphi(x)}_{L^2\p{\R_t\times\R^2_x}}
\leq C\n{\varphi}_{L^2\p{\R^2_x}}.
\end{equation}
\end{thm}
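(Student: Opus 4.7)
The plan is to apply the $n$-dimensional secondary comparison principle of Corollary \ref{COR:dimnex}, which, in the same spirit as the radial case of Theorem \ref{Th:nondisprad}, reduces the desired invariant smoothing estimate to the translation-invariance of Lebesgue measure via a Plancherel--Fubini argument in the variables transverse to one coordinate axis. (I am reading the $\R^2_x$ in the statement as a typo for $\R^n_x$, consistent with the hypothesis $n\geq 1$.)

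Concretely, first note that $|\nabla a(\xi)|^{1/2} \leq C_n \sum_{j=1}^{n}|\partial_j a(\xi)|^{1/2}$, so it suffices to prove, for each $j$,
\begin{equation*}
\n{\jp{x}^{-s}|\partial_j a(D_x)|^{1/2} e^{it a(D_x)}\varphi}_{L^2(\R_t\times\R^n_x)}
\leq C \n{\varphi}_{L^2(\R^n_x)}.
\end{equation*}
After relabelling coordinates I may fix $j=1$, and using $\jp{x}^{-s}\leq\jp{x_1}^{-s}$ together with Plancherel in $x'=(x_2,\ldots,x_n)$ (the operators $e^{ita(D_x)}$ and $|\partial_1 a(D_x)|^{1/2}$ commute with this transform), the problem decouples over the fibre parameter $\xi'\in\R^{n-1}$ into the one-dimensional invariant smoothing estimate for the symbol $f_{\xi'}(\rho):=a(\rho,\xi')$. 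Since $a$ is a real polynomial, $f_{\xi'}$ is a real polynomial in $\rho$ of degree at most $\deg a$, uniformly in $\xi'$. At this point Corollary \ref{COR:RStype} applies with $\sigma(\rho)=|f'_{\xi'}(\rho)|^{1/2}$: on each of the finitely many intervals of monotonicity of $f_{\xi'}$ (at most $\deg a$ in number, independent of $\xi'$) the bound follows, exactly as in Theorem \ref{Th:nondisprad}, from the translation-invariance of the Lebesgue measure on $\R$. Integrating the resulting $L^2_{x_1,t}$-bound against $d\xi'$ and summing over $j$ yields the theorem. For the exceptional $\xi'$ on which $f'_{\xi'}\equiv 0$ (i.e.\ $a(\cdot,\xi')$ is constant), the factor $|\partial_1 a(D_x)|^{1/2}$ vanishes on that fibre and the contribution is zero.

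The main obstacle I anticipate is the uniformity of the constant in the parameter $\xi'$: although the degree of $f_{\xi'}$ is uniformly bounded, its critical points move with $\xi'$ and may coalesce or escape to infinity. This is precisely where the translation-invariance of Lebesgue measure is decisive: the one-dimensional estimate on each monotonicity subinterval depends only on the number of such subintervals (controlled by $\deg a$) and on $s>1/2$, but not on their length or location, because changing variables by $\lambda=f_{\xi'}(\rho)$ trivialises the problem. Granting this uniformity, Theorem \ref{polynomial} is essentially a single application of the $n$-dimensional comparison tool Corollary \ref{COR:dimnex} to the pair $(|\nabla a(\xi)|^{1/2},\,a(\xi))$, with the polynomial hypothesis entering only to guarantee the finiteness and uniform boundedness of the critical set on each fibre.
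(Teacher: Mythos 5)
Your proposal is correct and follows essentially the same route as the paper: for each coordinate $\xi_j$, decompose frequency space fibrewise into the at most $\deg a$ intervals of strict monotonicity of $a$ in $\xi_j$ (uniformly in $\xi'$), apply the comparison principle on each piece with $\sigma=|\partial_j a|^{1/2}$ so that everything reduces to translation invariance of Lebesgue measure, sum, and combine the coordinates via the bounded multiplier $|\nabla a(\xi)|^{1/2}\bigl(\sum_j|\partial_j a(\xi)|^{1/2}\bigr)^{-1}$. The only cosmetic differences are that the paper packages your fibrewise Plancherel argument through explicit strata $\Omega_{k,l}$ indexed by the number of distinct simple real roots of $\partial_1 a(\cdot,\xi')$, and the tool invoked mid-proof should be Corollary \ref{COR:dimnex} rather than the radial Corollary \ref{COR:RStype}, as you yourself note at the end.
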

\begin{proof}
We can assume that the polynomial $a$ is not a constant since otherwise the estimate is trivial.
Thus, let $m\geq 1$ be the degree of polynomial $a(\xi)$, and we write
it in the form
\[
a(\xi)
=p_0\xi_1^m+p_1(\xi')\xi_1^{m-1}+\cdots+p_{m-1}(\xi')\xi_1+p_m(\xi'),
\]
where $p_k(\xi')$ is a real polynomial in $\xi'=(\xi_2',\ldots,\xi_n')$
of degree $k$ ($k=0,1,\ldots,m$).
The polynomial equation
\[
\partial_1 a(\xi)
=mp_0\xi_1^{m-1}+(m-1)p_1(\xi')\xi_1^{m-2}+\cdots+p_{m-1}(\xi')=0
\]
in $\xi_1$ has at most $m-1$ real roots.
For $k=0,1,\ldots,m-1,$ let $U_k$ be the set of $\xi'\in\R^{n-1}$ for which it
has $k$ distinct real simple roots 
$\lambda_{k,1}(\xi')<\lambda_{k,2}(\xi')\cdots<\lambda_{k,k}(\xi')$, and let
\[
\Omega_{k,l}:=
\b{(\xi_1,\xi')\in\R^n:\xi'\in U_k,\,
\lambda_{k,l}(\xi')<\xi_1<\lambda_{k,l+1}(\xi')}
\]
for $l=0,1,\ldots,k$, regarding $\lambda_{k,0}$ and $\lambda_{k,k+1}$
as $-\infty$ and $\infty$ respectively.
Then we have the decomposition
\[
\Omega=\b{\xi\in\R^n:\partial_1 a(\xi)\neq0}
=\bigcup_{\substack{k=0,1,\ldots,m-1, \\ l=0,1,\ldots,k-1}}\Omega_{k,l},
\]
and $a(\xi)$ is strictly monotone in $\xi_1$ on each $\Omega_{k,l}$.
Hence by taking $\chi$ in Corollary \ref{COR:dimnex}
to be the characteristic functions $\chi_{k,l}$ of the set $\Omega_{k,l}$,
we have the estimate
\[
\n{\jp{x_1}^{-s}\chi_{k,l}(D_x)
\abs{\partial_1 a(D_x)}^{1/2}
e^{it a(D_x)}\varphi(x)}_{L^2(\R_t\times\R_x^n)}
\leq C\n{\varphi}_{L^2(\R_x^n)}.
\]
Taking the sum in $k,l$ and noting the fact that the Lebesgue measure of
the complement of the set $\Omega$ is zero, we have
\[
\n{\jp{x_1}^{-s}
\abs{\partial_1 a(D_x)}^{1/2}
e^{it a(D_x)}\varphi(x)}_{L^2(\R_t\times\R_x^n)}
\leq C\n{\varphi}_{L^2(\R_x^n)}.
\]
Regarding other variables $\xi_j$ ($j=2,\ldots,n$) as the first one,
we also then have
\[
\n{\jp{x_j}^{-s}\abs{\partial_j a(D_x)}^{1/2}
e^{it a(D_x)}\varphi(x)}_{L^2(\R_t\times\R_x^n)}
\leq C\n{\varphi}_{L^2(\R_x^n)},
\]
and combining them all we have
\[
\n{\jp{x}^{-s}
\p{
\abs{\partial_1 a(D_x)}^{1/2}
+\cdots+
\abs{\partial_n a(D_x)}^{1/2}
}
e^{it a(D_x)}\varphi(x)}_{L^2(\R_t\times\R_x^n)}
\leq C\n{\varphi}_{L^2(\R_x^n)}
\]
by trivial inequalities $\jp{x}^{-s}\leq\jp{x_j}^{-s}$.
Substituting $\eta(D_x)\varphi$
for $\varphi$ in the estimate,
where
\[
\eta(\xi)=
|\nabla a(\xi)|^{1/2}
\p{
\abs{\partial_1 a(\xi)}^{1/2}
+\cdots+
\abs{\partial_n a(\xi)}^{1/2}
}
^{-1},
\]
we have
estimate \eqref{EQ:inv32}
if we note the boundedness of $\eta(\xi)$ 
and use Plancherel's theorem.
\end{proof}

%\begin{thm}\label{nondispersive}
%Suppose $n\geq 1$ and $s>1/2$.
%Let $a(\xi)$ be a sum of one dimensional polynomials
%\[
%a(\xi)=f_1(\xi_1)+\cdots +f_n(\xi_n),
%\quad\text{{\rm $f_j(t)$ : polynomial \quad($j=1,\ldots n$)}}.
%\]
%Then we have
%\begin{equation}\label{EQ:inv33}
%\n{\jp{x}^{-s}|\nabla a(D_x)|^{1/2}
%e^{it a(D_x)}\varphi(x)}_{L^2\p{\R_t\times\R^n_x}}
%\leq C\n{\varphi}_{L^2\p{\R^n_x}}.
%\end{equation}
%\end{thm}
%\begin{proof}
%By taking $\chi$ in Corollary \ref{COR:dim1ex} 
%to be characteristic functions of appropriate sets, we have estimates
%\[
%\n{\jp{x_j}^{-s}|f'_j(D_j)|^{1/2}
%e^{it f_j(D_j)}\varphi(x)}_{L^2(\R_t\times\R_x^n)}
%\leq C\n{\varphi}_{L^2(\R_x^n)}
%\qquad(j=1,\ldots,n),
%\]
%which imply 
%\[
%\n{\jp{x}^{-s}\p{|f'_1(D_1)|^{1/2}+\cdots|f'_n(D_n)|^{1/2}}
%e^{it a(D_x)}\varphi(x)}_{L^2(\R_t\times\R_x^n)}
%\leq C\n{\varphi}_{L^2(\R_x^n)}
%\]
%by trivial inequalities $\jp{x}^{-s}\leq\jp{x_j}^{-s}$
%and Plancherel's theorem for partial Fourier transforms.
%Substituting $\eta(D_x)\varphi$
%for $\varphi$ in the estimate,
%where
%\[
%\eta(\xi)=
%|\nabla a(\xi)|^{1/2}\p{|f'_1(\xi_1)|^{1/2}+\cdots|f'_n(\xi_n)|^{1/2}}^{-1},
%\]
%we have
%estimate \eqref{EQ:inv32}
%if we note the boundedness of $\eta(\xi)$ 
%and use Plancherel's theorem again.
%\end{proof}

\par
\medskip 
\par
\begin{example}\label{nondispersive1}
Some of normal forms listed listed in \eqref {EQ:examples2} in Introduction
satisfy dispersiveness assumptions.
Indeed, $a(\xi_1,\xi_2)=\xi_1^3+\xi_2^3$ and
$\xi_1^3-\xi_1\xi_2^2$ are homogeneous and satisfy
the assumption (H).
The other normal forms however satisfy neither (H) nor (L).
For example, $a(\xi_1,\xi_2)=\xi_1^3$ and
$\xi_1\xi_2^2$ are homogeneous
but $\nabla a(\xi_1,\xi_2)=0$ when $\xi_1=0$ and $\xi_2=0$ respectively.
On the other hand
$a(\xi_1,\xi_2)=\xi_1^3+\xi_2^2$ and $\xi_1\xi_2^2+\xi_1^2$
are not homogeneous
and satisfy $\nabla a(\xi_1,\xi_2)=0$ at the origin.
See Example \ref{ex:isolated-critical} for others.
But even for them we still have invariant smoothing estimate
\eqref{EQ:maininv} with $s>1/2$ by Theorem \ref{polynomial}.
\end{example}
\par
\medskip 
\par

\medspace

\subsection{Hessian at non-dispersive points}
\label{subsection:degenerate}
Now we will present another
approach to treat non-dispersive equations.
Recall that, in \cite[Section 5]{RS4},
the method of canonical transformation effectively works
to reduce smoothing estimates for dispersive equations
listed in Section \ref{subsection2.1}
to some model estimates.
For example, as mentioned in the beginning of
Section \ref{SECTION:second_comparison},
estimate \eqref{EQ:main1} in Theorem \ref{M:H1} is
reduced to model estimates \eqref{model:1} and \eqref{model:2}
in Corollary \ref{Th:typeI}.
We explain here that this strategy works for non-dispersive 
cases as well.

\medskip 
We will however look at the rank of the Hessian $\nabla^2a(\xi)$,
instead of the principal type assumption $\nabla a(\xi)\not=0$.
Assume now that $a=a(\xi)\in C^\infty(\Rn\setminus0)$ is 
real-valued and
positively homogeneous of order two.
It can be noted that 
from Euler's identity we obtain
\begin{equation}\label{id:euler2}
\nabla a(\xi)={}^t\xi\nabla^2 a(\xi)
\end{equation}
since $\nabla a(\xi)$ is homogeneous
of order one (here $\xi$ is viewed as a row).
Then the condition $\rank \nabla^2 a(\xi)=n$ implies
$\nabla a(\xi)\not=0$ ($\xi\neq0$),
and as we have already explained,
we have invariant estimates \eqref{EQ:maininv}, \eqref{EQ:maininv2}
and \eqref{EQ:maininv3}
by Theorem \ref{M:H1} 
in this favourable case.
We will show that in the non-dispersive situation
the rank of $\nabla^2 a(\xi)$ still has a responsibility for
smoothing properties.
We assume 
\begin{equation} \label{a1}
\rank \nabla^2 a(\xi)
\geq k\quad\text{whenever}\quad \nabla a(\xi)=0\quad
(\xi\neq0)
\end{equation}
with some $1\leq k\leq n-1$.
We note that condition \eqref{a1} is invariant under
the canonical transformation
in the following sense:

\begin{lem}\label{l1}
Let $a=\sigma\circ \psi$, with $\psi:U\to\R^n$
satisfying $\det D\psi(\xi)\not=0$ on an open set
$U\subset\R^n$.
Then, for each $\xi\in U$,
$\nabla a(\xi)=0$ if and only if $\nabla \sigma(\psi(\xi))=0$.
Furthermore the ranks of $\nabla^2 a(\xi)$ and $\nabla^2\sigma(\psi(\xi))$
are equal on $\Gamma$ whenever $\xi\in U$ and $\nabla a(\xi)=0$.
\end{lem}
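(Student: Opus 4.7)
The plan is to reduce everything to the chain rule, noting that the two claims are standard but worth carrying out carefully because the second one relies on the first. First I would differentiate $a=\sigma\circ\psi$ once to get the row-vector identity
\[
\nabla a(\xi)=\nabla\sigma(\psi(\xi))\cdot D\psi(\xi),
\]
valid for $\xi\in U$. Since $\det D\psi(\xi)\neq 0$, the matrix $D\psi(\xi)$ is invertible, so $\nabla a(\xi)=0$ if and only if $\nabla\sigma(\psi(\xi))=0$. This settles the first assertion with no further work.

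For the rank statement I would differentiate the identity $\partial_i a(\xi)=\sum_j (\partial_j\sigma)(\psi(\xi))\,\partial_i\psi_j(\xi)$ once more and collect terms in matrix form, obtaining
\[
\nabla^2 a(\xi)= D\psi(\xi)^{\,T}\,\nabla^2\sigma(\psi(\xi))\,D\psi(\xi)+R(\xi),
\]
where the remainder $R(\xi)=\sum_j(\partial_j\sigma)(\psi(\xi))\,\nabla^2\psi_j(\xi)$ depends linearly on $\nabla\sigma(\psi(\xi))$. By the first part, at any $\xi\in U$ with $\nabla a(\xi)=0$ we have $\nabla\sigma(\psi(\xi))=0$, hence $R(\xi)=0$. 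Therefore at such points
\[
\nabla^2 a(\xi)= D\psi(\xi)^{\,T}\,\nabla^2\sigma(\psi(\xi))\,D\psi(\xi).
\]

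Finally, since $D\psi(\xi)$ is invertible, the right-hand side is a congruence transform of $\nabla^2\sigma(\psi(\xi))$. Multiplication on either side by an invertible matrix preserves rank, so
\[
\rank\nabla^2 a(\xi)=\rank\nabla^2\sigma(\psi(\xi)),
\]
which is the claim. There is no real obstacle here; the only point that needs care is writing the chain rule for the second derivative neatly enough to see that the correction term $R(\xi)$ is proportional to $\nabla\sigma(\psi(\xi))$ and therefore drops out precisely at the non-dispersive points, which is exactly the regime in which the statement is claimed.
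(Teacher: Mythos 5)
Your proof is correct and follows essentially the same route as the paper: one application of the chain rule plus invertibility of $D\psi$ for the first assertion, and a second differentiation in which the term proportional to $\nabla\sigma(\psi(\xi))$ vanishes at critical points, leaving a congruence $D\psi^{T}\,\nabla^2\sigma\,D\psi$ that preserves rank. Your write-up is simply a more detailed version of the paper's two-line argument.
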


\begin{proof}
Differentiation gives
$\nabla a(\xi)=\nabla\sigma(\psi(\xi))D\psi(\xi)$ and
we have the first assertion.
Another differentiation gives
$\nabla^2 a(\xi)=\nabla^2\sigma (\psi(\xi)) D\psi D\psi$
when $\nabla a(\xi)=0$.
This implies the second assertion.
\end{proof}

To fix the notation, we assume
\begin{equation}\label{a2}
\nabla a(e_n)=0
\quad\text{and}\quad
\rank\nabla^2 a(e_n)=k
\quad(1\leq k\leq n-1),
\end{equation}
where $e_n=(0,\ldots,0,1)$.
Then we have $a(e_n)=0$ by Euler's identity $2a(\xi)=\xi\cdot\nabla a(\xi)$.
We claim that there exists a conic neighbourhood
$\Gamma\subset\Rn\setminus0$ of $e_n$
and a homogeneous $C^\infty$-diffeomorphism $\psi:\Gamma\to\widetilde\Gamma$
(satisfying $\psi(\lambda\xi)=\lambda\psi(\xi)$ for all $\lambda>0$
and $\xi\in\Gamma$)
as appeared in Section \ref{SECTION:canonical}
such that we have the form
\begin{equation}\label{rf}
a(\xi)=(\sigma\circ\psi)(\xi),
\quad
\sigma(\eta)=
c_1\eta_1^2+\cdots+c_k\eta_k^2+r(\eta_{k+1},\ldots,\eta_n),
\end{equation}
where $\eta=(\eta_1,\ldots,\eta_n)$ and $c_j=\pm1$ ($j=1,2,\ldots,k$).
We remark that $r$ must be real-valued and 
positively homogeneous of order two.
\medskip 
We will prove the existence of such $\psi$ that
will satisfy \eqref{rf}.
By \eqref{id:euler2}, \eqref{a2}, and the symmetricity,
all the entries of the matrix $\nabla^2 a (e_n)$ are zero
except for the (perhaps)
non-zero upper left $(n-1)\times(n-1)$ corner matrix.
Moreover, by a linear transformation involving only
the first $(n-1)$ variables of
$\xi=(\xi_1,\ldots,\xi_{n-1},\xi_n)$,
we may assume
$\partial^2a/\partial{\xi_1}^2(e_n)\not=0$.
We remark that \eqref{a2} still holds
under this transformation.
Then, by the Malgrange preparation theorem, we can write
\begin{equation} \label{ra1}
a(\xi)=\pm c(\xi)^2 (\xi_1^2+a_1(\xi^\prime)\xi_1+a_2(\xi^\prime)),
\quad
\xi^\prime=(\xi_2,\ldots,\xi_n).
\end{equation}
locally  in a neighbourhood of $e_n$,
where $c(\xi)>0$ is some strictly positive function, while
function $a_1$ and $a_2$ are smooth and real-valued.
Restricting this expression to the hyperplane $\xi_n=1$, 
and using the homogeneity
\[
a(\xi)=\pm\xi_n^2a(\xi_1/\xi_n,\ldots,\xi_{n-1}/\xi_n,1),
\]
we can extend the expression \eqref{ra1} to a
conic neighbourhood $\Gamma$ of $e_n$, 
so that functions 
$c(\xi), a_1(\xi^\prime)$ and $a_2(\xi^\prime)$ 
are positively homogeneous of orders zero, one, and two, respectively.
Let us define $\psi_0(\xi)=c(\xi)\xi$ and 
$\tau(\eta)=\pm(\eta_1^2+a_1(\eta^\prime)\eta_1+a_2(\eta^\prime))$,
so that $a(\xi)=(\tau\circ\psi_0)(\xi)$,
where we write $\eta=(\eta_1,\eta')$, $\eta'=(\eta_2,\ldots,\eta_n)$.
Furthermore,
let us define
$\psi_1(\xi)=(\xi_1+\frac12 a_1(\xi^\prime),\xi')$,
so that $\tau(\xi)=(\sigma\circ\psi_1)(\xi)$ with
$\sigma(\eta)=\eta_1^2+r(\eta^\prime)$,
where $r(\eta^\prime)=
a_2(\eta^\prime)-\frac14 a_1(\eta^\prime)^2$ is
positively homogeneous of degree two.
Then we have $a=\sigma\circ\psi$, where $\psi=\psi_1\circ\psi_0$,
and thus we have the expression \eqref{rf} with $k=1$.

\medskip 
We note that, by the construction, we have
$\psi(e_n)=c(e_n)(\frac12a_1(e_n'),e_n')$,
where $c(e_n)>0$ and $e_n'=(0,\ldots,0,1)\in\R^{n-1}$.
Then we can see that the function $r(\eta^\prime)$ of
$(n-1)$-variables is
defined on a conic neighbourhood of $e_n'$ in $\R^{n-1}$.
On account of this fact and Lemma \ref{l1},
we can apply the same argument above to $r(\eta^\prime)$,
and repeating the process $k$-times,
we have the expression \eqref{rf}.

\medskip 
To complete the argument,
we check that $\det D\psi_0(\xi)=c(\xi)^n$, which clearly 
implies $\det D\psi(\xi)=c(\xi)^n$, and assures that 
it does not vanish on a sufficiently narrow $\Gamma$.
We observe first that
$D\psi_0(\xi)=c(\xi)I_n+{}^t\xi\nabla c(\xi)$, 
where $I_n$ is the identity $n$ by $n$ matrix. 
We note that if we consider the matrix
$A=(\alpha_i\beta_j)_{i,j=1}={}^t\alpha\cdot  \beta$, where
$\alpha=(\alpha_1,\ldots,\alpha_n)$, $\beta=(\beta_1,\ldots,\beta_n)$,
then $A$ has rank one, so its eigenvalues are $n-1$ zeros and
some $\lambda$. But $\tr A$ is also the sum of the eigenvalues,
hence $\lambda=\tr A$. 
Now, let $\alpha=\xi$, $\beta=\nabla c(\xi)$, and $A={}^t\alpha \cdot\beta$.
Since $c(\xi)$ is homogeneous of order zero, by Euler's identity
we have $\tr A=\xi\cdot\nabla c(\xi)=0$, hence all eigenvalues of
$A$ are zero. It follows now that there is a non-degenerate matrix
$S$ such that $S^{-1} A S$ is strictly upper triangular.
But then $\det D\psi_0(\xi)=\det (c(\xi)I_n + S^{-1}A S)$, where
matrix $c(\xi)I_n + S^{-1}A S$ is upper 
triangular with $n$ copies of
$c(\xi)$ at the diagonal. Hence $\det D\psi_0(\xi)=c(\xi)^n$.

\medskip
On account of the above observations,
we have the following result which states that invariant estimates
\eqref{EQ:maininv}, \eqref{EQ:maininv2} and \eqref{EQ:maininv3}
with $m=2$ still hold for a
class of non-dispersive equations:
\par
\medskip 
\par
\begin{thm}\label{th1}
Let $a\in C^\infty(\Rn\setminus0)$ be real-valued and satisfy
$a(\lambda\xi)=\lambda^2 a(\xi)$ for all $\lambda>0$ and $\xi\neq0$.
Assume that $\rank\nabla^2a(\xi)\geq n-1$ whenever
$\nabla a(\xi)=0$ and $\xi\neq0$.
\begin{itemize}
\item 
Suppose $n\geq2$ and $s>1/2$.
Then we have
\[
\n{\jp{x}^{-s}|\nabla a(D_x)|^{1/2}
e^{it a(D_x)}\varphi(x)}_{L^2\p{\R_t\times\R^n_x}}
\leq C\n{\varphi}_{L^2\p{\R^n_x}}.
\]
\item Suppose $(4-n)/2<\alpha<1/2$, or $(3-n)/2<\alpha<1/2$
in the elliptic case $a(\xi)\neq0$ {\rm (}$\xi\neq0${\rm)}.
Then we have
\[
\n{\abs{x}^{\alpha-1}|\nabla a(D_x)|^{\alpha}e^{ita(D_x)}\varphi(x)}_
{L^2\p{\R_t\times\R^n_x}}
\leq C\n{\varphi}_{L^2\p{\R^n_x}}.
\]
\item Suppose $n>4$, or $n>3$ in the elliptic case $a(\xi)\neq0$
$(\xi\neq0).$
Then we have
\[
\n{\jp{x}^{-1}\jp{\nabla a(D_x)}^{1/2}e^{ita(D_x)}\varphi(x)}_
{L^2\p{\R_t\times\R^n_x}}
\leq C\n{\varphi}_{L^2\p{\R^n_x}}.
\]
\end{itemize}
\end{thm}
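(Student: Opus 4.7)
The plan is to reduce, via a microlocal partition of unity and the canonical transformations constructed immediately before the statement, to estimates already covered by Theorem \ref{M:H1}, possibly in dimension $n-1$, and then to pull back using the canonical invariance of the invariant form (Theorem \ref{Th:caninv}).

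First observe that differentiating Euler's identity yields $\nabla a(\xi)=\nabla^2 a(\xi)\,\xi$, so at every critical point $\xi_0\neq 0$ one has $\xi_0\in\ker\nabla^2 a(\xi_0)$; combined with $\rank\nabla^2 a(\xi_0)\geq n-1$, this forces $\rank\nabla^2 a(\xi_0)=n-1$ exactly. I would then fix a smooth conic partition of unity $\{\chi_j\}_{j=0}^N$ on $\R^n\setminus 0$ so that $\chi_0$ is supported where $|\nabla a(\xi)|\geq c|\xi|$ and each $\chi_j$ ($j\geq 1$) is supported in a narrow conic neighborhood $\Gamma_j$ of a connected component of the critical set, on which the construction preceding the statement furnishes a homogeneous diffeomorphism $\psi_j\colon\Gamma_j\to\widetilde\Gamma_j$ with $a=\sigma_j\circ\psi_j$ of the form \eqref{rf}. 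On $\supp\chi_0$ the symbol $a$ satisfies (H) and Theorem \ref{M:H1} applies directly, and the three estimates for $\chi_0(D_x)\varphi$ are equivalent to the claimed invariant forms by the discussion in Section \ref{subsection2.2}.

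On each $\Gamma_j$ the reduced symbol reads
\[
\sigma_j(\eta)=c_1\eta_1^2+\cdots+c_{n-1}\eta_{n-1}^2+c_n\eta_n^2,
\]
with diagonal Hessian $2\operatorname{diag}(c_1,\ldots,c_n)$. Since this Hessian must have rank exactly $n-1$ at the transformed critical point $\eta_0=\psi_j(\xi_0)$ while $(\eta_0)_n\neq 0$ (by the explicit formula $\psi(e_n)=c(e_n)(\tfrac12 a_1(e_n'),e_n')$ from the preceding construction), necessarily $c_n=0$ and $c_1,\ldots,c_{n-1}\in\{\pm 1\}$. Thus $\sigma_j(D_y)=\sum_{k<n}c_kD_k^2$ acts trivially in $y_n$. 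Fourier-transforming in $y_n$ and applying the $(n-1)$-dimensional Theorem \ref{M:H1} (with $m=2$) fibrewise to the resulting non-degenerate quadratic form on $\R^{n-1}$, then integrating via Plancherel in $\eta_n$, yields the three invariant estimates for $\sigma_j$ with the partial weights $\jp{y'}^{-s}$ or $|y'|^{\alpha-1}$; the trivial bounds $\jp{y}^{-s}\leq\jp{y'}^{-s}$ and (for $\alpha<1$) $|y|^{\alpha-1}\leq|y'|^{\alpha-1}$ then upgrade these to the required full weights. The ranges $(4-n)/2<\alpha<1/2$, the elliptic improvement $(3-n)/2<\alpha<1/2$, and $n>4$ (resp.\ $n>3$) in the third estimate are exactly the $m=2$ specialisations of Theorem \ref{M:H1} read in dimension $n-1$. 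Theorem \ref{Th:caninv} then transfers each model estimate back to $a$ microlocalised to $\Gamma_j$, and summing over $j$ completes the proof.

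The main obstacle I anticipate is the microlocal bookkeeping at the canonical-transform step: one must check that after localising by $\chi_j(D_x)$, transforming by $\psi_j$, performing the fibrewise argument, and transforming back, the left-hand side is genuinely $\jp{x}^{-s}|\nabla a(D_x)|^{1/2}e^{ita(D_x)}\chi_j(D_x)\varphi$ (and similarly for the other two estimates), rather than a twisted variant. The identity $\nabla a(\xi)=\nabla\sigma_j(\psi_j(\xi))\,D\psi_j(\xi)$, together with smoothness and non-vanishing of $D\psi_j$ on $\Gamma_j$, shows that the ratio $|\nabla a(\xi)|^{1/2}/|\nabla\sigma_j(\psi_j(\xi))|^{1/2}$ is a positively homogeneous symbol of order zero on $\Gamma_j$ which is bounded and bounded away from zero; this harmless factor is absorbed into the Fourier-multiplier framework that underlies Theorem \ref{Th:caninv}.
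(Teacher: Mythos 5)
Your proposal is correct and follows essentially the same route as the paper: Euler's identity forces $\rank\nabla^2a=n-1$ exactly at critical directions, the Malgrange/normal-form construction reduces $a$ microlocally to $\sigma(\eta)=c_1\eta_1^2+\cdots+c_{n-1}\eta_{n-1}^2$ (the paper phrases your ``$c_n=0$'' step as showing the one-variable remainder $r$ is affine and homogeneous of degree two, hence zero), and the model estimates come from Theorem \ref{M:H1} in the first $n-1$ variables combined with Plancherel in $x_n$, the trivial weight inequalities, and the pullback via Theorem \ref{Th:caninv} with the weighted boundedness of $I_{\psi,\gamma}$ from Theorem \ref{Th:L'2k}. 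The ranges of $\alpha$ and $n$ you identify as the $m=2$, dimension-$(n-1)$ specialisations are exactly those in the paper.
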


\begin{proof}
By microlocalisation and an appropriate rotation,
we may assume $\supp \widehat{\varphi}\subset \Gamma$,
where $\Gamma\subset\Rn\setminus0$ is a sufficiently 
narrow conic neighbourhood
of the direction $e_n=(0,\ldots,0,1)$.
Since everything is alright in the dispersive case 
$\nabla a(e_n)\neq0$ by Theorem \ref{M:H1},
we may assume $\nabla a(e_n)=0$.
We may also assume $n\geq2$ since $\nabla a(e_n)=0$ 
implies $\nabla a(\xi)=0$
for all $\xi\neq0$ in the case $n=1$.
Then we have $\rank\nabla^2a(e_n)\neq n$ by the relation \eqref{id:euler2},
hence $\rank\nabla^2a(e_n)=n-1$ by the assumption
$\rank\nabla^2a(\xi)\geq n-1$.
In the setting \eqref{a2} and \eqref{rf} above, we have
\begin{equation}\label{rank:rem}
\rank\nabla^2\widetilde r(\psi(e_n))=0
\end{equation}
by Lemma \ref{l1}, where
$\widetilde r(\eta)=r(\eta_{k+1},\ldots,\eta_n)$.
Since $k=n-1$ in our case, we can see that $r$ is a function
of one variable and $r^{\prime\prime}$ vanishes
identically by \eqref{rank:rem} and the homogeneity of $r$.
Then $r$ is a polynomial of order one,
but is also positively homogeneous of order two.
Hence we can conclude that $r=0$ identically, and have the relation
\[
a(\xi)=(\sigma\circ\psi)(\xi),
\quad
\sigma(\eta)=
c_1\eta_1^2+\cdots+c_{n-1}\eta_{n-1}^2.
\]
Now, we have the estimates
\begin{align*}
&\n{\jp{x}^{-s} |\nabla \sigma(D_x)|^{1/2}
e^{it \sigma(D_x)}\varphi(x)}_{L^2\p{\R_t\times\R^n_x}}
\leq C\n{\varphi}_{L^2\p{\R^n_x}},
\\
&\n{\abs{x}^{\alpha-1}|\nabla \sigma(D_x)|^{\alpha}
e^{it\sigma(D_x)}\varphi(x)}_{L^2\p{\R_t\times\R^n_x}}
\leq C\n{\varphi}_{L^2\p{\R^n_x}},
\\
&\n{\jp{x}^{-1}\jp{\nabla \sigma(D_x)}^{1/2}
e^{it\sigma(D_x)}\varphi(x)}_{L^2\p{\R_t\times\R^n_x}}
\leq C\n{\varphi}_{L^2\p{\R^n_x}},
\end{align*}
if we use the trivial inequalities $\jp{x}^{-s}\leq\jp{x'}^{-s}$,
$|x|^{\alpha-1}\leq|x'|^{\alpha-1}$ and $\jp{x}^{-1}\leq\jp{x'}^{-1}$,
Theorem \ref{M:H1}
with respect to $x'$, and
the Plancherel theorem in $x_n$,
where $x=(x',x_n)$ and $x'=(x_1,\ldots, x_{n-1})$.
On account of Theorem \ref{Th:caninv}
and the $L^2_{-s}$, $\Dot{L}^2_{\alpha-1}$, $L^2_{-1}$--boundedness
of the operators $I_{\psi,\gamma}$ and $I_{\psi,\gamma}^{-1}$
for $(1/2<)s<n/2$, $-n/2<\alpha-1(<-1/2)$ (see Theorem \ref{Th:L'2k}),
respectively,
we have the conclusion.
\end{proof}
\par
\medskip 
\par
\begin{example}\label{nondispersive3}
The function $a(\xi)=b(\xi)^2$ satisfies the condition in
Theorem \ref{th1}, where $b(\xi)$
is a positively homogeneous function of order one
such that $\nabla b(\xi)\neq0$ ($\xi\neq0$).
Indeed, if $b(\xi)$ is elliptic, then 
$\nabla a(\xi)=2b(\xi)\nabla b(\xi)\neq0$
($\xi\neq0$).
If $b(\xi_0)=0$ at a point $\xi_0\neq0$, then $\nabla a(\xi_0)=0$
and further differentiation immediately yields
$\nabla^2 a(\xi_0)=2^t\nabla b(\xi_0) \nabla b(\xi_0)$,
and clearly we have $\rank\nabla^2 a(\xi_0)\geq1$.
Especially in the case $n=2$, $a(\xi)$ meets the condition in
Theorem \ref{th1}.
As an example, we consider
$
a(\xi)=\frac{\xi_1^2\xi_2^2}{\xi_1^2+\xi_2^2}.
$
Setting $b(\xi)=\xi_1\xi_2/|\xi|$, we clearly have $a(\xi)=b(\xi)^2$
and
$
\nabla b(\xi)
=\p{\frac{\xi_2^3}{|\xi|^3},\frac{\xi_1^3}{|\xi|^3}},
$
hence $\nabla b(\xi)\neq0$ ($\xi\neq0$).
Although $\nabla a(\xi)=0$ on the lines $\xi_1=0$ and $\xi_2=0$,
we have invariant estimates \eqref{EQ:maininv}, \eqref{EQ:maininv2}
and \eqref{EQ:maininv3} in virtue of Theorem \ref{th1}.
This is an illustration of a smoothing estimate for the 
Cauchy problem for an equation like
$$
i\partial_t \Delta u+D_1^2D_2^2 u=0,
$$
which can be reduced to the second order non-dispersive
pseudo-differential equation with symbol $a(\xi)$ above.
Similarly, we have these estimates for more general case
$
a(\xi)=\frac{\xi_1^2\xi_2^2}{\xi_1^2+\xi_2^2}
+\xi^2_3+\cdots+\xi_n^2
$
since we obtain $\rank\nabla^2a(\xi)\geq n-1$
from the observation above.
\end{example}

\medspace

\subsection{Isolated critical points}
\label{subsection:nondegenerate}

Next we consider more general operators $a(\xi)$ of order $m$
which may have some lower order terms.
Then even the most favourable case $\det\nabla^2a(\xi)\not=0$
does not imply the dispersive assumption $\nabla a(\xi)\not=0$.
The method of canonical transformation, 
however, can also allow us to treat this problem by
obtaining localised estimates near points $\xi$ 
where $\nabla a(\xi)=0$.

\medskip 

Assume that $\xi_0$ is a non-degenerate critical point
of $a(\xi)$, that is, that we have $\nabla a(\xi_0)=0$
and $\det \nabla^2 a(\xi_0)\not=0$. Let us microlocalise
around $\xi_0$, so that we only look at what happens around $\xi_0$.
In this case, the order of the symbol $a(\xi)$ does not play any role
and we do not distinguish between the main part and lower
order terms.
Let $\Gamma$ denote a sufficiently small
open bounded neighbourhood
of $\xi_0$ so that $\xi_0$ is the only critical point of
$a(\xi)$ in $\Gamma$. Since $\nabla^2 a(\xi_0)$ is symmetric and
non-degenerate, we may assume
$$\nabla^2 a(\xi_0)={\rm diag}\{\pm 1,\cdots,\pm 1\}$$
by a linear transformation.
By Morse lemma for $a(\xi)$, there exists a diffeomorphism
$\psi:\Gamma\to\widetilde\Gamma\subset\Rn$ 
with an open bounded neighbourhood
of the origin such that
\[
a(\xi)=(\sigma\circ\psi)(\xi),
\quad
\sigma(\eta)=
c_1\eta_1^2+\cdots+c_n\eta_n^2,
\]
where $\eta=(\eta_1,\ldots,\eta_n)$ and $c_j=\pm1$ ($j=1,2,\ldots,n$).
From Theorem \ref{M:H1} applied to operator
$\sigma(D_x)$, we obtain the estimates
\begin{align*}
&\n{\jp{x}^{-s} |\nabla \sigma(D_x)|^{1/2}
e^{it \sigma(D_x)}\varphi(x)}_{L^2\p{\R_t\times\R^n_x}}
\leq C\n{\varphi}_{L^2\p{\R^n_x}}
\qquad(s>1/2),
\\
&\n{\jp{x}^{-1} \jp{\nabla \sigma(D_x)}^{1/2}
e^{it \sigma(D_x)}\varphi(x)}_{L^2\p{\R_t\times\R^n_x}}
\leq C\n{\varphi}_{L^2\p{\R^n_x}}
\qquad(n>2).
\end{align*}
Hence by Theorem \ref{Th:caninv}, together with
the $L^2_{-s}$, $L^2_{-1}$-boundedness of the operators
$I_{\psi,\gamma}$ and $I_{\psi,\gamma}^{-1}$
(which is assured by Theorem \ref{Th:L2k}),
we have these estimates with $\sigma(D_x)$
replaced by $a(D_x)$ assuming $\supp\widehat \va\subset\Gamma$.
%\medskip 
On the other hand, we have the same estimates
for general $\va$ by Theorem \ref{M:L4} 
if we assume condition (L).
The above argument, however, assures that the 
following weak assumption
is also sufficient if $a(\xi)$ has finitely many 
critical points and
they are non-degenerate:
\medskip
\begin{equation}\tag{{\bf L$'$}}
\begin{aligned}
&a(\xi)\in C^\infty(\R^n),\qquad 
|\nabla a(\xi)|\geq C\jp{\xi}^{m-1}\quad(\text{for large $\xi\in\R^n$})\quad
\textrm{for some}\; C>0,
\\
&|\partial^\alpha\p{a(\xi)-a_m(\xi)}|\leq C_\alpha\abs{\xi}^{m-1-|\alpha|}
\quad\text{for all multi-indices $\alpha$ and all $|\xi|\gg 1$}.
\end{aligned}
\end{equation}
\medskip

Thus, summarising the above argument, we have established the following result
of invariant estimates \eqref{EQ:maininv} and \eqref{EQ:maininv3}:
\par
\medskip 
\par
\begin{thm}\label{THM:isolated-critical}
Let $a\in C^\infty(\Rn)$ be real-valued and assume that it
has finitely many critical points,
all of which are non-degenerate.
%$\nabla a(\xi)=0$ implies 
%$\det\nabla^2 a(\xi)\not=0$. 
Assume also {\rm{(L$'$)}}.
\begin{itemize}
\item Suppose $n\geq1$, $m\geq1$, and $s>1/2$.
Then we have
\[
\n{\jp{x}^{-s}|\nabla a(D_x)|^{1/2}
e^{it a(D_x)}\varphi(x)}_{L^2\p{\R_t\times\R^n_x}}
\leq C\n{\varphi}_{L^2\p{\R^n_x}}.
\]
\item Suppose $n>2$ and $m\geq1$.
Then we have
\[
\n{\jp{x}^{-1}\jp{\nabla a(D_x)}^{1/2}
e^{it a(D_x)}\varphi(x)}_{L^2\p{\R_t\times\R^n_x}}
\leq C\n{\varphi}_{L^2\p{\R^n_x}}.
\]
\end{itemize}
\end{thm}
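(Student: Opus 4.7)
The plan is to combine a microlocal decomposition with the canonical transformation recipe already outlined in the paragraphs preceding the theorem. Concretely, I would fix a finite covering of $\R^n$ consisting of: (i) a bounded open neighbourhood $\Gamma_j$ around each critical point $\xi_0^{(j)}$ (taken small enough to contain no other critical point), (ii) a bounded open set $\Gamma_0$ covering the rest of the compact region where $\nabla a$ is bounded away from zero but (L$'$) has not yet kicked in, and (iii) a conic neighbourhood of infinity $\Gamma_\infty$ where (L$'$) supplies dispersiveness $|\nabla a(\xi)|\geq c\jp{\xi}^{m-1}$. Using a subordinate partition of unity in the frequency variable, write $\varphi=\sum_j \chi_j(D_x)\varphi$ and estimate each piece separately; summing finitely many estimates produces the result.

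For the piece supported in $\Gamma_j$ with $\nabla a(\xi_0^{(j)})=0$, I would first diagonalise $\nabla^2 a(\xi_0^{(j)})$ by a linear change of variables to reduce to $\nabla^2 a(\xi_0^{(j)})=\mathrm{diag}(\pm1,\dots,\pm1)$. Morse's lemma then provides a $C^\infty$-diffeomorphism $\psi:\Gamma_j\to\widetilde\Gamma_j$ with $a=\sigma\circ\psi$ and
\[
\sigma(\eta)=c_1\eta_1^2+\cdots+c_n\eta_n^2,\qquad c_j=\pm1.
\]
Since $\sigma$ satisfies (H), Theorem \ref{M:H1} yields the desired invariant estimates for $\sigma(D_x)$ applied to arbitrary data. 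I would then transfer these to $a(D_x)\chi_j(D_x)\varphi$ via Theorem \ref{Th:caninv}: the Fourier integral operators $I_{\psi,\gamma}^{\pm 1}$ are bounded on $L^2_{-s}$ and $L^2_{-1}$ by Theorem \ref{Th:L2k} (applicable because $\Gamma_j$ is bounded and $\psi$ is a bona fide diffeomorphism there), so the invariant weights $|\nabla a(D_x)|^{1/2}$ and $\jp{\nabla a(D_x)}^{1/2}$ are pulled back correctly.

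For the conic piece at infinity $\Gamma_\infty$, assumption (L$'$) provides $|\nabla a(\xi)|\geq c\jp{\xi}^{m-1}$, which suffices to repeat the proof of \cite[Theorem 5.4]{RS4} microlocalised to $\Gamma_\infty$; equivalently, one splits off the principal part and applies the comparison principle there to conclude the invariant estimates \eqref{EQ:maininv} and \eqref{EQ:maininv3} for this high-frequency part. The intermediate piece $\Gamma_0$ is compact with $\nabla a$ uniformly non-zero, so the symbol $|\nabla a(\xi)|^{1/2}\chi_0(\xi)$ is a bounded multiplier and the estimate for this piece reduces, by Plancherel, to a standard local smoothing estimate which is controlled by Theorem \ref{Th:HL} or by a direct application of Corollary \ref{COR:dimnex} after choosing one coordinate on which $a$ is monotone.

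The main obstacle I expect is checking the hypotheses for the canonical transform step near each critical point: specifically, that the Morse diffeomorphism $\psi$ and its inverse do define operators $I_{\psi,\gamma}^{\pm 1}$ to which Theorem \ref{Th:L2k} applies with the required weight exponents (the range $s>1/2$ versus $s=1$ in the second estimate forces the restriction $n>2$ there, mirroring what happens in Theorem \ref{M:H1}). Once the weights fall in the admissible range for the weighted $L^2$ boundedness of the canonical operator, the rest is assembly: add up finitely many microlocal estimates using $\jp{x}^{-s}\le C_\chi\jp{x}^{-s}$ and the $L^2$-boundedness of each $\chi_j(D_x)$.
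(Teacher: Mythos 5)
Your proposal follows essentially the same route as the paper: microlocalise in frequency, use the Morse lemma to write $a=\sigma\circ\psi$ with $\sigma$ a non-degenerate quadratic form near each critical point, apply Theorem \ref{M:H1} to $\sigma$, and transfer back via Theorem \ref{Th:caninv} with the weighted $L^2$-boundedness of $I_{\psi,\gamma}^{\pm1}$ from Theorem \ref{Th:L2k}, handling the region away from the critical points by the dispersive bound coming from (L$'$) and compactness. The one correction: for the intermediate compact piece $\Gamma_0$ you cannot fall back on Theorem \ref{Th:HL}, since that gives only a time-local bound while the theorem asserts a global one; your alternative via Corollary \ref{COR:dimnex} is the right one (after further decomposing $\Gamma_0$ into finitely many pieces on each of which some $\partial_j a$ is bounded away from zero, so that $|\nabla a(\xi)|^{1/2}\le A\,|\partial_j a(\xi)|^{1/2}$ and the required monotonicity in $\xi_j$ hold), and this is also in effect how the paper treats that region.
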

\par
\medskip 
\par
\begin{example}
It is easy to see that
$a(\xi)=\xi_1^4+\cdots+\xi_n^4+|\xi|^2$ satisfies the assumption of 
Theorem \ref{THM:isolated-critical}.
We remark that Morii \cite{Mo} also established the first estimate
in Theorem \ref{THM:isolated-critical} under a more restrictive condition
but which also allows this example.
\end{example}

\begin{example}\label{ex:isolated-critical}
Some normal forms listed in \eqref{EQ:examples2}
are also covered by Theorem \ref{THM:isolated-critical}
although they are covered by Theorem \ref{polynomial}.
Indeed $a(\xi_1,\xi_2)={\xi_1^3+\xi_1\xi_2}$ has a unique critical point
at the origin and it is non-degenerate.
It is also easy to see that $a(\xi_1,\xi_2)={\xi_1^3+\xi_2^3+\xi_1\xi_2}$
and ${\xi_1^3-3\xi_1\xi_2^2+\xi_1^2+\xi_2^2}$ have
their critical points at $(\xi_1,\xi_2)=(0,0)$, $(-1/3,-1/3)$ and
$(\xi_1,\xi_2)=(0,0)$, $(1/3,\pm1/\sqrt{3})$, $(-2/3,0)$ respectively,
and all of them are non-degenerate.
\end{example}

%\newpage
\section{Equations with time-dependent coefficients}
\label{SECTION:time-dependent}

Finally we discuss smoothing estimates for equations
with time-dependent coefficients:
\begin{equation}\label{equation-t}
\left\{
\begin{aligned}
\p{i\partial_t+b(t,D_x)}\,u(t,x)&=0\quad\text{in $\R_t\times\R^n_x$},\\
u(0,x)&=\varphi(x)\quad\text{in $\R^n_x$}.
\end{aligned}
\right.
\end{equation}
If the symbol $b(t,\xi)$ is independent of $t$, invariants estimates
\eqref{EQ:maininv}, \eqref{EQ:maininv2} and \eqref{EQ:maininv3} say that
$\nabla_\xi b(t,D_x)$ is responsible for the smoothing property.
The natural question here is what quantity replaces it 
if $b(t,\xi)$ depends on $t$.

\medskip
We can give an answer to this question
if $b(t,\xi)$ is of the product type
\[
b(t,\xi)=c(t)a(\xi),
\]
where we only assume that $c(t)>0$ is a continuous function.
In the case of dispersive and Strichartz estimates for
higher order (in $\partial_t$) equations the
situation may be very delicate and in general depends on
the rates of oscillations of $c(t)$ (see e.g. Reissig \cite{Rei}
for the case of the time-dependent wave equation,
or \cite{RW, CUFRW} for more general equations).

\medskip
For smoothing estimates, we will be able to state a rather
general result in Theorem \ref{Th:time-dependent} below.
The final formulae show that a natural extension of the
invariant estimates of the
previous section still remain valid in this case.
In this special case, the equation \eqref{equation-t} can be 
transformed
to the equation with time-independent coefficients.
In fact, by the assumption for $c(t)$, the function
\[
C(t)=\int^t_0 c(s)\,ds
\] 
is strictly monotone and the inverse $C^{-1}(t)$ exists.
Then the function
\[
v(t,x):=u(C^{-1}(t),x)
\]
satisfies
\[
\partial_tv(t,x)=\frac1{c(C^{-1}(t))}(\partial_tu)(C^{-1}(t),x),
\]
hence $v(t,x)$ solves the equation
\[
\left\{
\begin{aligned}
\p{i\partial_t+a(D_x)}\,v(t,x)=&0,\\
v(0,x)=&\varphi(x),
\end{aligned}
\right.
\]
if $u(t,x)$ is a solution to equation \eqref{equation-t}.
By this argument,
invariant estimates 
for $v(t,x)=e^{ita(D_x)}\varphi(x)$
should imply also estimates for the solution
\[
u(t,x)=v(C(t),x)=e^{i\int^t_0b(s,D_x)\,ds}\varphi(x)
\]
to equation \eqref{equation-t}.
For example, if we notice the relations
\[
\n{v(\cdot,x)}_{L^2}=\n{|c(\cdot)|^{1/2}u(\cdot,x)}_{L^2}
\]
and
\[
c(t)\nabla a(D_x)=\nabla_\xi b(t,D_x),
\]
we obtain the estimate
\begin{equation}\label{EQ:maininv-t}
\n{\jp{x}^{-s}|\nabla_\xi b(t,D_x)|^{1/2}
e^{i\int^t_0b(s,D_x)\,ds}\varphi(x)}_{L^2\p{\R_t\times\R^n_x}}
\leq C\n{\varphi}_{L^2\p{\R^n_x}}
\end{equation}
from the invariant estimate \eqref{EQ:maininv}.
Estimate \eqref{EQ:maininv-t} is a natural extension of the invariant
estimate \eqref{EQ:maininv} to the case of time-dependent coefficients,
which says that
$\nabla_\xi b(t,D_x)$ is still
responsible for the smoothing property.
From this point of view, we may call it an {\it invariant estimate}
too. We can also note that estimate \eqref{EQ:maininv-t} may
be also obtained directly, by formulating an obvious extension of
the comparison principles to the time dependent setting.
We also have similar estimates from the invariant estimates
\eqref{EQ:maininv2} and \eqref{EQ:maininv3}. The same method
of the proof yields the following:
\par
\medskip 
\par
\begin{thm}\label{Th:time-dependent}
Let $[\alpha,\beta]\subset [-\infty,+\infty]$. Assume that
function $c=c(t)$ is continuous on $[\alpha,\beta]$ and that
$c\not=0$ on $(\alpha,\beta)$. Let $u=u(t,x)$ be the solution
of equation \eqref{equation-t} with $b(t,\xi)=c(t)a(\xi)$,
where $a=a(\xi)$ satisfies assumptions of any part of 
Theorems \ref{M:H1}, \ref{M:L4}, \ref{Th:HL}, \ref{Th:nondisprad}, 
\ref{th1} or \ref{THM:isolated-critical}.
Then the smoothing
estimate of the corresponding theorem holds
provided we replace $L^2(\R_t,\Rnx)$ by 
$L^2([\alpha,\beta],\Rnx)$, and insert $|c(t)|^{1/2}$ in the
left hand side norms.
\end{thm}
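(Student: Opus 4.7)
The plan is to reduce to the autonomous case by the time change suggested in the paragraph preceding the theorem, and then push estimates back through this change of variable, tracking how the Jacobian produces the factor $|c(t)|^{1/2}$. Since $c$ is continuous and non-vanishing on $(\alpha,\beta)$, the intermediate value theorem forces $c$ to have constant sign there; hence (assuming $0\in[\alpha,\beta]$) the primitive $C(t)=\int_0^t c(s)\,ds$ is strictly monotone, and so $C:[\alpha,\beta]\to J$ with $J=C([\alpha,\beta])$ is a homeomorphism onto an interval containing the origin. Let $v(\tau,x)=u(C^{-1}(\tau),x)$. A direct computation, as sketched in the paragraph above, gives $\partial_\tau v = (\partial_t u)(C^{-1}(\tau),x)/c(C^{-1}(\tau))$, so that $(i\partial_\tau + a(D_x))v = 0$ with $v(0,x)=\varphi(x)$; that is, $v(\tau,x)=e^{i\tau a(D_x)}\varphi(x)$.

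Next, I would apply whichever of Theorems \ref{M:H1}, \ref{M:L4}, \ref{Th:HL}, \ref{Th:nondisprad}, \ref{th1} or \ref{THM:isolated-critical} is invoked in the hypothesis, directly to $v$. Each of these theorems (or its time-local version in the case of Theorem \ref{Th:HL}) yields an estimate of the schematic form
\[
\n{w(x)P(D_x)\,v(\tau,x)}_{L^2(J\times\R^n_x)}\leq C\n{\varphi}_{L^2(\R^n_x)},
\]
where $w(x)$ is the appropriate weight and $P(D_x)$ is the smoothing multiplier (for example $|\nabla a(D_x)|^{1/2}$, $\jp{\nabla a(D_x)}^{1/2}$, or $|\nabla a(D_x)|^\alpha$, etc.). Restricting to $J$ rather than all of $\R_\tau$ is harmless: either the estimate already holds globally in $\tau$, or, in the case of Theorem \ref{Th:HL}, $J$ is a bounded interval and we are in the time-local regime to which that theorem applies.

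The final step is to pull the estimate back to $u$ via $\tau=C(t)$. Since $c$ has constant sign, $d\tau=c(t)\,dt$ gives $|d\tau|=|c(t)|\,dt$, so for any $F(\tau,x)$,
\[
\int_{J}\int_{\R^n_x}|F(\tau,x)|^2\,dxd\tau
=\int_\alpha^\beta\int_{\R^n_x}|F(C(t),x)|^2\,|c(t)|\,dxdt.
\]
Applying this with $F(\tau,x)=w(x)P(D_x)v(\tau,x)=w(x)P(D_x)u(t,x)$ and pulling $|c(t)|$ inside as $|c(t)|^{1/2}$ yields
\[
\n{|c(t)|^{1/2}w(x)P(D_x)\,u(t,x)}_{L^2([\alpha,\beta]\times\R^n_x)}\leq C\n{\varphi}_{L^2(\R^n_x)},
\]
which is exactly the asserted estimate. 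The only minor point to watch is that in the invariant formulation we may prefer to write $|c(t)|^{1/2}P(D_x)$ as a function of $b(t,D_x)$; for instance, since $\nabla_\xi b(t,\xi)=c(t)\nabla a(\xi)$, we have $|c(t)|^{1/2}|\nabla a(D_x)|^{1/2}=|\nabla_\xi b(t,D_x)|^{1/2}$, which recovers \eqref{EQ:maininv-t}. There is no real obstacle in the argument beyond verifying sign-of-$c$ and monotonicity of $C$; the substance of the statement lies entirely in the autonomous estimates already proved, and the change of time variable is what keeps the invariant form visible.
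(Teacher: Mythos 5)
Your argument is exactly the paper's: the authors prove the theorem by the same time change $\tau=C(t)$ reducing to the autonomous equation, applying the corresponding time-independent estimate to $v(\tau,x)=u(C^{-1}(\tau),x)$, and recovering the factor $|c(t)|^{1/2}$ from the Jacobian $d\tau=c(t)\,dt$. The proposal is correct and adds only the (sound) observation that continuity and non-vanishing of $c$ on $(\alpha,\beta)$ force a constant sign, which the paper leaves implicit.
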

\par
\medskip 
\par
We note that it is possible that $\alpha=-\infty$ and that
$\beta=+\infty$, in which case by continuity of $c$ at such points we
simply mean that the limits of $c(t)$ exist as
$t\to\alpha+$ and as $t\to\beta-$.

\medskip
To give an example of an estimate 
from Theorem \ref{Th:time-dependent}, 
let us look at the case of the first statement of Theorem \ref{M:H1}.
In that theorem, we suppose that $a(\xi)$ satisfies assumption (H),
and we assume $n\geq 1$, $m>0$, and $s>1/2$. 
Theorem \ref{M:H1} assures that in this case we have the
smoothing estimate \eqref{EQ:main1}, which is 
\[
\n{\jp{x}^{-s}|D_x|^{(m-1)/2}e^{ita(D_x)}\varphi(x)}_
{L^2\p{\R_t\times\R^n_x}}
\leq C\n{\varphi}_{L^2\p{\R^n_x}}.
\]
Theorem \ref{Th:time-dependent} states that solution
$u(t,x)$ of equation \eqref{equation-t} 
satisfies this estimate
provided we replace $L^2(\R_t,\Rnx)$ by 
$L^2([\alpha,\beta],\Rnx)$, and insert $|c(t)|^{1/2}$ in the
left hand side norm. This means that $u$ satisfies
\[
\n{\jp{x}^{-s}|c(t)|^{1/2}|D_x|^{(m-1)/2}u(t,x)}_
{L^2\p{[\alpha,\beta]\times\R^n_x}}
\leq C\n{\varphi}_{L^2\p{\R^n_x}}.
\]
The same is true with statements of any of Theorem 
\ref{M:H1}, \ref{M:L4}, \ref{Th:HL},
\ref{Th:nondisprad}, \ref{th1} or \ref{THM:isolated-critical}.

%\renewcommand{\theequation}{A-\arabic{equation}}
  % redefine the command that creates the equation no.
%  \setcounter{equation}{0}  % reset counter 
%  \section*{APPENDIX}  % use *-form to suppress numbering
\appendix
\section{Canonical transformation and comparison principle}
\label{SECTION:tools}

For convenience of the reader in this appendix we
briefly recall two powerful tools introduced in \cite{RS4} for getting smoothing estimates,
that is, the canonical transformation and the comparison principle,
which enable us to induce global smoothing estimates for dispersive equations
rather easily, and formulate several corollaries of these methods to be used in
the analysis of this paper. In particular, Theorem \ref{Th:caninv} explains the
invariance of \eqref{EQ:main-invariant} and similar estimates under canonical
transforms. Also, Corollary \ref{COR:dimnex} is instrumental in treating equations
with polynomial symbols (i.e. differential evolution equations) in Section \ref{subsection:nonradial}.

We remark that all known smoothing estimates from Section \ref{subsection2.1}
were proved in \cite{RS4} by using these two methods.

\medspace

\subsection{Canonical transformation}
\label{SECTION:canonical}

The first tool is the canonical transformation
which transforms the equation
with the operator $a(D_x)$ and
the Cauchy data $\varphi(x)$
to that with $\sigma(D_x)$ and $g(x)$
at the estimate level,
where $a(D_x)$ and $\sigma(D_x)$ are related with each other
as
$a(\xi)=\p{\sigma\circ\psi}(\xi)$.

\medskip 
Let $\Gamma$, $\widetilde{\Gamma}\subset\R^n$ be open sets and
$\psi:\Gamma\to\widetilde{\Gamma}$
be a $C^\infty$-diffeomorphism (we do not assume them to be cones
since we do not require homogeneity of phases).
We always assume that
\begin{equation}\label{infty}
C^{-1}\leq\abs{\det \partial\psi(\xi)}\leq C\quad(\xi\in\Gamma),
\end{equation}
for some $C>0$.
Let
$\gamma\in C^\infty(\Gamma)$ and
$\widetilde{\gamma}=\gamma\circ\psi^{-1}\in C^\infty(\widetilde{\Gamma})$
be cut-off functions
which satisfy $\supp\gamma\subset\Gamma$,
$\supp\widetilde{\gamma}\subset\widetilde{\Gamma}$.
Then we set
\begin{equation}\label{DefI0}
\begin{aligned}
I_{\psi,\gamma} u(x)
&=\FT^{-1}\left[\gamma(\xi)\FT u\p{\psi(\xi)}\right](x)
\\
&=(2\pi)^{-n}\int_{\R^n}\int_{\Gamma}
 e^{i(x\cdot\xi-y\cdot\psi(\xi))}\gamma(\xi)u(y) dyd\xi,
\\
I_{\psi,\gamma}^{-1} u(x)
&=\FT^{-1}\left[\widetilde{\gamma}(\xi)\FT
u\p{\psi^{-1}(\xi)}\right](x)
\\
&=(2\pi)^{-n}\int_{\R^n}
\int_{\widetilde{\Gamma}}
 e^{i(x\cdot\xi-y\cdot\psi^{-1}(\xi))}
 \widetilde{\gamma}(\xi)u(y) dyd\xi.
\end{aligned}
\end{equation}
In the case that $\Gamma$, $\widetilde{\Gamma}\subset\R^n\setminus0$
are open cones,
we may consider the homogeneous $\psi$ and $\gamma$ which satisfy
$\supp\gamma\cap \Sph^{n-1}\subset\Gamma\cap \Sph^{n-1}$ and
$\supp\widetilde{\gamma}\cap 
\Sph^{n-1}\subset\widetilde{\Gamma}\cap \Sph^{n-1}$,
where $\Sph^{n-1}=\b{\xi\in\Rn: |\xi|=1}$.
Then we have the expressions for compositions
%\begin{equation}\label{eq:cut}
%I_{\psi,\gamma}=\gamma(D_x)\cdot I_\psi=I_\psi\cdot\widetilde{\gamma}(D_x),\quad
%I_{\psi,\gamma}^{-1}
%=\widetilde{\gamma}(D_x)\cdot I^{-1}_\psi=I^{-1}_\psi\cdot\gamma(D_x),
%\end{equation}
%and the identities
%\begin{equation}\label{eq:id}
%I_{\psi,\gamma}\cdot I_{\psi,\gamma}^{-1}=\gamma(D_x)^2,\quad
%I_{\psi,\gamma}^{-1}\cdot I_{\psi,\gamma}=\widetilde{\gamma}(D_x)^2.
%\end{equation}
%We have also the formula
%\begin{equation}\label{eq:cnon}
\[
I_{\psi,\gamma}\cdot\sigma(D_x)
=\p{\sigma\circ\psi}(D_x)\cdot I_{\psi,\gamma},\quad
I_{\psi,\gamma}^{-1}\cdot \p{\sigma\circ\psi}(D_x)
=\sigma(D_x)\cdot I_{\psi,\gamma}^{-1}
\]
%\end{equation}
which enable us to relate $a(D_x)$ with $\sigma(D_x)$
when $a(\xi)=\p{\sigma\circ\psi}(\xi)$.

\medskip 
We also introduce the weighted $L^2$-spaces.
For a weight function $w(x)$, let $L^2_{w}(\R^n;w)$ be
the set of measurable functions $f:\Rn\to\C$ 
such that the norm
\[
\n{f}_{L^2(\R^n;w)}
=\p{\int_{\R^n}\abs{w(x) f(x)}^2\,dx}^{1/2}
\]
is finite.
Then we have the following fundamental theorem:
\par
\medskip 
\par
\begin{thm}[{\cite[Theorem 4.1]{RS4}}]\label{Th:reduction}
Assume that the operator $I_{\psi,\gamma}$ defined by \eqref{DefI0}
is $L^2(\R^n;w)$--bounded.
Suppose that we have the estimate
%\begin{equation}\label{red}
\[
\n{w(x)\rho(D_x)e^{it\sigma(D_x)}g(x)}_{L^2\p{\R_t\times\R^n_x}}
\leq C\n{g}_{L^2\p{\R^n_x}}
\]
%\end{equation}
for all $g$ such that
$\supp\widehat{g}\subset\supp\widetilde\gamma$.
Assume also that the function
%\begin{equation}\label{bdd}
\[
q(\xi)=\frac{\gamma\cdot\zeta}{\rho\circ \psi}(\xi)
\]
%\end{equation}
is bounded.
Then we have
%\begin{equation}\label{org}
\[
\n{w(x)\zeta(D_x)e^{ita(D_x)}\varphi(x)}_{L^2\p{\R_t\times\R^n_x}}
\leq C\n{\varphi}_{L^2\p{\R^n_x}}
\]
%\end{equation}
for all $\varphi$
such that $\supp\widehat{\varphi}\subset\supp\gamma$,
where $a(\xi)=(\sigma\circ\psi)(\xi)$.
\end{thm}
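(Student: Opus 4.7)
The plan is to build the target estimate from the assumed model estimate by transporting it via the canonical transformation, with the bounded function $q$ controlling the discrepancy between the two sides. First I would record the intertwining relation $I_{\psi,\gamma}\cdot\sigma(D_x)=a(D_x)\cdot I_{\psi,\gamma}$ stated in Section \ref{SECTION:canonical}, which upgrades by functional calculus (or, more directly, by inspection on the Fourier side, since $a=\sigma\circ\psi$) to
\[
I_{\psi,\gamma}\,e^{it\sigma(D_x)} = e^{ita(D_x)}\,I_{\psi,\gamma}.
\]

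Next I would set $g:=I_{\psi,\gamma}^{-1}\varphi$ and read off from the formula $\widehat{g}(\eta)=\widetilde\gamma(\eta)\widehat{\varphi}(\psi^{-1}(\eta))$ that $\supp\widehat{g}\subset\supp\widetilde\gamma$, so the assumed estimate applies to $g$. The required control $\n{g}_{L^2}\leq C\n{\varphi}_{L^2}$ is then immediate by Plancherel and the change of variable $\eta=\psi(\xi)$, using \eqref{infty} and boundedness of the cut-off $\gamma$.

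The heart of the argument is the identity
\[
\zeta(D_x)\,e^{ita(D_x)}\varphi(x) = I_{\psi,q}\br{\rho(D_x)\,e^{it\sigma(D_x)}g(x)},
\]
which one reads off on the Fourier side: the left-hand side equals $\FT^{-1}\br{\zeta(\xi)e^{it\sigma(\psi(\xi))}\widehat{\varphi}(\xi)}$, and substituting $\zeta(\xi)\gamma(\xi)=q(\xi)\rho(\psi(\xi))$ together with $\widehat{g}(\psi(\xi))=\gamma(\xi)\widehat{\varphi}(\xi)$ produces the right-hand side, after using that $\gamma$ may be arranged to equal one on $\supp\widehat{\varphi}$. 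Once the identity is in hand, the $L^2(\R^n;w)$-boundedness of $I_{\psi,q}$, inherited from that of $I_{\psi,\gamma}$ together with the boundedness of $q$, followed by the assumed model estimate, chains together to yield
\[
\n{w(x)\zeta(D_x)e^{ita(D_x)}\varphi}_{L^2\p{\R_t\times\R^n_x}}\leq C\n{\varphi}_{L^2\p{\R^n_x}}.
\]

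The main obstacle I expect is the careful handling of the cut-offs in this identity: the composition $I_{\psi,\gamma}\circ I_{\psi,\gamma}^{-1}$ produces a factor of $\gamma^2$ rather than the identity, and one must exploit the hypothesis $\supp\widehat{\varphi}\subset\supp\gamma$ (together with the freedom to choose $\gamma$ smooth and equal to one there) to absorb this factor harmlessly. The transfer of $L^2(\R^n;w)$-boundedness from $I_{\psi,\gamma}$ to $I_{\psi,q}$, given only the boundedness of $q$, also deserves attention, as it amounts to controlling a Fourier multiplier on the weighted space.
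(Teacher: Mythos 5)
Your overall architecture is the right one (and is the one used in \cite{RS4}): transport the model estimate for $\sigma(D_x)$ back to $a(D_x)$ via the exact intertwining $I_{\psi,\gamma}e^{it\sigma(D_x)}=e^{ita(D_x)}I_{\psi,\gamma}$, and use the boundedness of $q$ to convert the smoothing weight $\rho\circ\psi$ into $\zeta$. The change of variables giving $\supp\widehat g\subset\supp\widetilde\gamma$ and $\n{g}_{L^2}\leq C\n{\varphi}_{L^2}$ via \eqref{infty} is also correct. The leftover cut-off factor you flag ($\gamma^2$ with your choice $g=I_{\psi,\gamma}^{-1}\varphi$, a single $\gamma$ if one takes $\widehat g=\widehat\varphi\circ\psi^{-1}$ directly) is a genuine feature of the statement rather than of your proof: every version of the argument produces $\gamma(D_x)^{j}\zeta(D_x)e^{ita(D_x)}\varphi$ for some $j\geq1$ on the left, and one removes it exactly as you say, by the standing convention that $\gamma\equiv1$ on $\supp\widehat\varphi$ (which is how the theorem is invoked throughout the paper). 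So far so good.

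The genuine gap is the step where you apply $I_{\psi,q}$ on the \emph{outside} and claim its $L^2(\R^n;w)$--boundedness is ``inherited'' from that of $I_{\psi,\gamma}$ together with $q\in L^\infty$. It is not: the connecting operator is the Fourier multiplier $I_{\psi,q}=(q/\gamma)(D_x)\circ I_{\psi,\gamma}$ with symbol $q/\gamma=\zeta/(\rho\circ\psi)$, which need not be bounded at all (the whole point of the factor $\gamma$ in $q$ is to tame $\zeta/(\rho\circ\psi)$ where it blows up), and even a bounded multiplier is not automatically bounded on $L^2(\R^n;w)$ for the weights $w=\jp{x}^{-s}$ or $|x|^{\delta}$ used here. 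The fix is to reverse the order of operations so that the bounded multiplier is absorbed by the \emph{unweighted} $L^2$ norm on the right-hand side of the model estimate: set $\widehat g=\widehat\varphi\circ\psi^{-1}$, $\widetilde q=q\circ\psi^{-1}$, $h=\widetilde q(D_x)g$, and check on the Fourier side that
\[
\gamma(D_x)^2\,\zeta(D_x)\,e^{ita(D_x)}\varphi \;=\; I_{\psi,\gamma}\br{\rho(D_x)e^{it\sigma(D_x)}h}.
\]
Since $\supp\widehat h\subset\supp\widetilde\gamma$, the hypothesis gives $\n{w\rho(D_x)e^{it\sigma(D_x)}h}_{L^2(\R_t\times\R^n_x)}\leq C\n{h}_{L^2}\leq C\n{q}_{L^\infty}\n{g}_{L^2}\leq C\n{\varphi}_{L^2}$, and now only the assumed $L^2(\R^n;w)$--boundedness of $I_{\psi,\gamma}$ itself (applied for each fixed $t$ and then integrated in $t$) is needed to conclude. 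With this reordering your argument closes using exactly the stated hypotheses.
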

\par
\medskip 
\par
We remark that invariant estimate \eqref{EQ:inv-form}
introduced in Section \ref{subsection2.2}, hence
estimates \eqref{EQ:maininv}--\eqref{EQ:maininv3} are
invariant under canonical transformations by Theorem \ref{Th:reduction}.
More precisely, we have the following theorem: 
\par
\medskip 
\par
\begin{thm}\label{Th:caninv}
Let $\zeta$ be a function on $\R_+$ of the form
$\zeta(\rho)=\rho^\eta$ or $\p{1+\rho^2}^{\eta/2}$
with some $\eta\in\R$.
Assume that the operators $I_{\psi,\gamma}$ and $I_{\psi,\gamma}^{-1}$
defined by \eqref{DefI0} are $L^2(\Rn;w)$--bounded.
Then the following two estimates
\begin{align*}
&\n{w(x)\zeta(|\nabla a(D_x)|)
e^{it a(D_x)}\varphi(x)}_{L^2\p{\R_t\times\R^n_x}}
\leq C\n{\varphi}_{L^2\p{\R^n_x}}
\quad (\supp\widehat\varphi\subset\supp\gamma),
\\
&\n{w(x) \zeta(|\nabla \sigma(D_x)|)
e^{it \sigma(D_x)}\varphi(x)}_{L^2\p{\R_t\times\R^n_x}}
\leq C\n{\varphi}_{L^2\p{\R^n_x}}
\quad (\supp\widehat\varphi\subset\supp\widetilde\gamma)
\end{align*}
are equivalent to each other, where $a=\sigma\circ\psi\in C^1$
on $\supp \gamma$.
\end{thm}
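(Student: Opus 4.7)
The plan is to derive each of the two stated estimates from the other by a direct application of Theorem \ref{Th:reduction} (and of the same theorem with the roles of $\psi$ and $\psi^{-1}$ interchanged). Since $I_{\psi^{-1},\widetilde\gamma}=I_{\psi,\gamma}^{-1}$ and $\sigma=a\circ\psi^{-1}$ on $\widetilde\Gamma$, the assumed $L^2(\Rn;w)$-boundedness of both $I_{\psi,\gamma}$ and $I_{\psi,\gamma}^{-1}$ supplies the operator-boundedness hypothesis of Theorem \ref{Th:reduction} in either direction. What remains is to verify the pointwise ratio condition on the symbol $q$ in that theorem.

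The crucial algebraic input is the chain rule $a=\sigma\circ\psi$, which on $\supp\gamma$ gives
\[
\nabla a(\xi)=\nabla\sigma(\psi(\xi))\,D\psi(\xi),
\]
and therefore
\[
\|D\psi(\xi)^{-1}\|^{-1}\,|\nabla\sigma(\psi(\xi))|\leq|\nabla a(\xi)|\leq\|D\psi(\xi)\|\,|\nabla\sigma(\psi(\xi))|
\qquad(\xi\in\supp\gamma).
\]
On $\supp\gamma$ the operator norms $\|D\psi\|$ and $\|D\psi^{-1}\|$ are uniformly bounded, since in the intended applications $\supp\gamma$ is either compact or a cone on which $\psi$ is homogeneous of degree one (so $D\psi$ is homogeneous of degree zero and is determined by its restriction to $\Sph^{n-1}\cap\supp\gamma$). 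Hence $|\nabla a(\xi)|$ and $|\nabla\sigma(\psi(\xi))|$ are comparable on $\supp\gamma$, and in particular vanish there simultaneously.

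For $\zeta(\rho)=\rho^\eta$ this comparability immediately yields boundedness of
\[
q(\xi)=\gamma(\xi)\,\frac{\zeta(|\nabla a(\xi)|)}{\zeta(|\nabla\sigma(\psi(\xi))|)},
\]
extended by continuity across common zeros. For $\zeta(\rho)=(1+\rho^2)^{\eta/2}$ the same conclusion follows, since comparability of nonnegative quantities yields comparability of $1+\rho^2$. Inserting this into Theorem \ref{Th:reduction} with the choice $\rho(\eta):=\zeta(|\nabla\sigma(\eta)|)$ and target multiplier $\zeta(|\nabla a|)$ converts the $e^{it\sigma(D_x)}$ estimate into the $e^{ita(D_x)}$ estimate; swapping the roles of $(a,\psi,\gamma)$ and $(\sigma,\psi^{-1},\widetilde\gamma)$ yields the converse implication and completes the equivalence.

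The only delicate point is the behaviour of $q$ at common critical points of $a$ and $\sigma$, where numerator and denominator both vanish; the chain-rule identity above handles this uniformly, provided one controls $\|D\psi^{\pm1}\|$ on $\supp\gamma$. The two specific functional forms of $\zeta$ allowed in the hypothesis are precisely those that propagate pointwise comparability of the arguments into uniform boundedness of $q$; outside of this verification the proof is a mechanical double application of Theorem \ref{Th:reduction}.
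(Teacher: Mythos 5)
Your proposal is correct and follows essentially the same route as the paper: the chain rule $\nabla a(\xi)=\nabla\sigma(\psi(\xi))D\psi(\xi)$, two-sided comparability of $|\nabla a|$ and $|\nabla\sigma\circ\psi|$ on $\supp\gamma$, and a double application of Theorem \ref{Th:reduction} with the roles of $(a,\psi,\gamma)$ and $(\sigma,\psi^{-1},\widetilde\gamma)$ interchanged. If anything, you are more careful than the paper about where the comparability comes from (the paper attributes it to the determinant bound \eqref{infty}, whereas uniform control of $\|D\psi^{\pm1}\|$ via compactness or homogeneity, as you note, is what is really used).
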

\begin{proof}
Note that $\nabla a(\xi)=\nabla\sigma(\psi(\xi))D\psi(\xi)$
and
$C|\nabla a(\xi)|\leq|\nabla\sigma(\psi(\xi))|\leq C'|\nabla a(\xi)|$
on $\supp\gamma$ with some $C,C'>0$,
which is assured by the assumption \eqref{infty}.
Then the result is obtained from Theorem \ref{Th:reduction}.
\end{proof}
\par
\medskip 
\par
As for the $L^2(\R^n;w)$--boundedness of the operator $I_{\psi,\gamma}$,
we have criteria for some special weight functions.
For $\ka\in\R$, let $L^2_\ka(\R^n)$, $\Dot{L}^2_\ka(\R^n)$ be
the set of measurable functions $f$ such that the norm
\[
\n{f}_{L^2_\ka(\R^n)}
=\p{\int_{\R^n}\abs{\langle x\rangle^\ka f(x)}^2\,dx}^{1/2},
\qquad
\n{f}_{\Dot{L}^2_\ka(\R^n)}
=\p{\int_{\R^n}\abs{|x|^\ka f(x)}^2\,dx}^{1/2}
\]
is finite, respectively.
Then we have the following:
\par
\medskip 
\par
\begin{thm}[{\cite[Theorem 4.2]{RS4}}]\label{Th:L2k}
Suppose $\ka\in\R$.
Assume that all the derivatives of entries of the 
$n\times n$ matrix
$\partial\psi$ and those of $\gamma$ are bounded.
Then the operators $I_{\psi,\gamma}$ and $I^{-1}_{\psi,\gamma}$ 
defined by
\eqref{DefI0} are $L^2_{\ka}(\R^n)$--bounded.
\end{thm}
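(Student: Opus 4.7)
The plan is a three-stage argument: establish the base case $\kappa=0$ by Plancherel, obtain positive integer weights through a commutator identity derived via integration by parts, and then pass to general $\kappa\in\R$ by duality and complex interpolation. The same reasoning applies verbatim to $I_{\psi,\gamma}^{-1}$, since from \eqref{infty} together with the chain rule one sees that all derivatives of $\partial\psi^{-1}$ are likewise bounded.

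For the base case, I would apply Plancherel to $I_{\psi,\gamma}u$, whose Fourier transform equals $\gamma(\xi)\widehat u\p{\psi(\xi)}$. The change of variables $\eta=\psi(\xi)$ in
\[
\n{I_{\psi,\gamma}u}_{L^2}^2=\int_{\Gamma}\abs{\gamma(\xi)}^2\abs{\widehat u(\psi(\xi))}^2\,d\xi
\]
produces a Jacobian $\abs{\det\partial\psi^{-1}(\eta)}$ which is bounded above by \eqref{infty}, while $\gamma$ itself is bounded under the hypotheses. This gives $\n{I_{\psi,\gamma}u}_{L^2}\leq C\n{u}_{L^2}$.

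For positive integer weight $N$, the key step is the commutation identity
\[
x_j\,I_{\psi,\gamma}u(x)=i\,I_{\psi,\partial_{\xi_j}\gamma}u(x)+\sum_{k=1}^{n}I_{\psi,\,\gamma\,\partial_{\xi_j}\psi_k}(y_k u)(x),
\]
which follows by writing $x_j e^{ix\cdot\xi}=-i\partial_{\xi_j}e^{ix\cdot\xi}$ and integrating by parts in $\xi$ inside the oscillatory integral \eqref{DefI0}. The decisive observation is that the new amplitudes $\partial_{\xi_j}\gamma$ and $\gamma\,\partial_{\xi_j}\psi_k$ again satisfy the hypotheses of the theorem, so the identity can be iterated. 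After $N$ steps, each term on the right takes the form $I_{\psi,\gamma_\beta}\p{y^\beta u}$ with bounded amplitude $\gamma_\beta$ and $|\beta|\leq N$; combining with the base case yields $\n{x^\alpha I_{\psi,\gamma}u}_{L^2}\leq C\n{\jp{y}^N u}_{L^2}$ for every $|\alpha|\leq N$, and hence the required inequality for $\kappa=N$.

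The final stage is to pass from positive integers to arbitrary $\kappa\in\R$. For negative integer $\kappa=-N$, I would argue by duality: since $L^2_{-N}(\R^n)$ is the dual of $L^2_{N}(\R^n)$, it suffices to verify that the formal adjoint $I_{\psi,\gamma}^*$ is bounded on $L^2_{N}(\R^n)$, and a direct kernel computation expresses $I_{\psi,\gamma}^*$ as an operator of the same type built from $\psi^{-1}$ and a bounded amplitude, to which the preceding argument applies. For non-integer $\kappa$, complex interpolation between consecutive integer weights closes the proof. The main technical point, and the one requiring care, is the justification of the integration by parts in an oscillatory-integral sense together with the verification that each iteration keeps all amplitudes within the admissible class; both reduce to the stability of the hypothesis ``bounded derivatives of all orders'' under products and partial differentiations.
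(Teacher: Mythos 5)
Your argument is correct. Note that the present paper does not prove Theorem \ref{Th:L2k} at all: it is quoted from \cite{RS4}, where the result is obtained from the authors' global weighted $L^2$-boundedness machinery for Fourier integral operators with phase $x\cdot\xi-y\cdot\psi(\xi)$ (building on \cite{RS1}, \cite{RS2}). Your route is more elementary and self-contained. The commutation identity is the crux and it is right; it is most cleanly verified on the Fourier side, applying $\widehat{x_jv}=i\partial_{\xi_j}\widehat{v}$ to $\gamma(\xi)\,\widehat{u}(\psi(\xi))$, which also disposes of the oscillatory-integral justification you flag. The remaining ingredients all check out: the amplitude class (smooth, supported in $\supp\gamma$, all derivatives bounded) is stable under $\partial_{\xi_j}$ and under multiplication by entries of $\partial\psi$; the adjoint is $I_{\psi^{-1},\tilde\gamma}$ with $\tilde\gamma=\overline{\gamma\circ\psi^{-1}}\,\abs{\det\partial\psi^{-1}}$, again admissible thanks to \eqref{infty} and the adjugate formula; and Stein--Weiss complex interpolation of the weights $\jp{x}^{2\ka}$ fills in non-integer $\ka$. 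What the FIO approach of \cite{RS4} buys is uniformity over a whole class of phases and amplitudes used elsewhere in that paper; what yours buys is transparency and minimal prerequisites. Two points should be made explicit: (i) the hypothesis must be read as including the zeroth-order derivatives, i.e.\ the entries of $\partial\psi$ and $\gamma$ themselves are bounded --- otherwise the new amplitudes $\gamma\,\partial_{\xi_j}\psi_k$ and the bound on $(\partial\psi)^{-1}$ via the adjugate can fail (this is the intended reading, and it is how the theorem is applied in Sections \ref{subsection:degenerate} and \ref{subsection:nondegenerate}); (ii) the extension of $\gamma$ by zero must be smooth on all of $\R^n$ for the distributional differentiation of $\gamma(\xi)\widehat{u}(\psi(\xi))$, which the standing support assumptions guarantee.
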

%\medskip
\begin{thm}[{\cite[Theorem 4.3]{RS4}}]\label{Th:L'2k}
Let $\Gamma$, $\widetilde{\Gamma}\subset\R^n\setminus0$ be open cones.
Suppose $|\ka|< n/2$.
Assume $\psi(\lambda\xi)=\lambda\psi(\xi)$,
$\gamma(\lambda\xi)=\gamma(\xi)$ for all $\lambda>0$ and $\xi\in\Gamma$.
Then the operators $I_{\psi,\gamma}$ and $I^{-1}_{\psi,\gamma}$
defined by \eqref{DefI0} are $L^2_{\ka}(\R^n)$--bounded
and $\Dot{L}^2_{\ka}(\R^n)$--bounded.
\end{thm}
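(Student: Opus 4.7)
The strategy is to exploit the homogeneities $\psi(\lambda\xi)=\lambda\psi(\xi)$ and $\gamma(\lambda\xi)=\gamma(\xi)$ to reduce everything to the compact-support setting already covered by Theorem \ref{Th:L2k}. The pivotal structural observation is that $I_{\psi,\gamma}$ commutes with every isotropic dilation $(D_tf)(x)=f(tx)$, $t>0$, and likewise for $I_{\psi,\gamma}^{-1}$. This is verified directly from \eqref{DefI0} by the substitution $\xi=t\eta$: combining $\widehat{D_tu}(\xi)=t^{-n}\hat u(\xi/t)$ with $\psi(t\eta)=t\psi(\eta)$ and $\gamma(t\eta)=\gamma(\eta)$ yields $I_{\psi,\gamma}D_t=D_tI_{\psi,\gamma}$. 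I also note that the Jacobian condition \eqref{infty} combined with homogeneity implies $c|\xi|\leq|\psi(\xi)|\leq C|\xi|$ on $\Gamma$, so $\psi$ preserves dyadic frequency scales up to a bounded factor.

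Fix a Littlewood--Paley partition $\sum_{j\in\Z}\chi_j\equiv1$ on $\Rn\setminus0$ with $\chi_j(\xi)=\chi(\xi/2^j)$ and $\chi\in C_c^\infty$ supported in $\{1/2<|\xi|<2\}$. Write $u=\sum_ju_j$ with $\widehat{u_j}=\chi_j\hat u$ and set $v_j=D_{2^{-j}}u_j$, so that $\widehat{v_j}$ lives in the fixed annulus $\{1/2<|\xi|<2\}$; dilation invariance then gives $I_{\psi,\gamma}u_j=D_{2^j}(I_{\psi,\gamma}v_j)$. On the enlarged annulus $\{1/4<|\xi|<4\}$, $\psi$ and $\gamma$ are smooth with all derivatives bounded, since homogeneity forces $\partial^\alpha\psi$ to be homogeneous of degree $1-|\alpha|$ and hence bounded on any compact set disjoint from the origin; the same holds for $\gamma$. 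Extend $\psi,\gamma$ to $\widetilde\psi,\widetilde\gamma$ on all of $\Rn$ with all derivatives of entries of $\partial\widetilde\psi$ and of $\widetilde\gamma$ bounded. Since $\widehat{v_j}$ is supported where originals and extensions coincide, $I_{\psi,\gamma}v_j=I_{\widetilde\psi,\widetilde\gamma}v_j$, and Theorem \ref{Th:L2k} yields $\|I_{\widetilde\psi,\widetilde\gamma}v_j\|_{L^2_\kappa}\leq C\|v_j\|_{L^2_\kappa}$ uniformly in $j$. Because both $\widehat{v_j}$ and $\widehat{I_{\psi,\gamma}v_j}$ (supported in the fixed annulus $\psi^{-1}(\{1/2<|\eta|<2\})$ by the preceding remark) are frequency-localised in fixed annuli, on these functions the norms $L^2_\kappa$ and $\Dot{L}^2_\kappa$ are comparable, giving the single-scale estimate $\|I_{\psi,\gamma}v_j\|_{\Dot{L}^2_\kappa}\leq C\|v_j\|_{\Dot{L}^2_\kappa}$.

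Combined with the dilation scaling $\|D_tg\|_{\Dot{L}^2_\kappa}=t^{-n/2-\kappa}\|g\|_{\Dot{L}^2_\kappa}$ (and the analogous relation for $L^2_\kappa$ up to dyadic equivalence), this propagates to $\|I_{\psi,\gamma}u_j\|_{\Dot{L}^2_\kappa}\leq C\|u_j\|_{\Dot{L}^2_\kappa}$ uniformly in $j$. The main obstacle is the assembly of these uniform single-scale bounds, because for $\kappa\neq0$ the pieces $u_j$ are almost orthogonal in $L^2$ but not in $\Dot{L}^2_\kappa$. This is where the hypothesis $|\kappa|<n/2$ enters decisively: it is exactly the condition that both $|x|^{2\kappa}$ and $\langle x\rangle^{2\kappa}$ belong to the Muckenhoupt class $A_2(\Rn)$, so the weighted Littlewood--Paley square function equivalence
\[
\|f\|_{\Dot{L}^2_\kappa}^2\sim\sum_{j\in\Z}\|\chi_j(D)f\|_{\Dot{L}^2_\kappa}^2,
\]
and its $L^2_\kappa$ analogue, are available. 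Applying this to both $u$ and to $I_{\psi,\gamma}u$ (whose Littlewood--Paley pieces are frequency-supported in the dyadic annuli $\psi^{-1}(\supp\chi_j)$, still of bounded overlap by homogeneity of $\psi$) and invoking the uniform single-scale bound gives the claimed $\Dot{L}^2_\kappa$- and $L^2_\kappa$-boundedness of $I_{\psi,\gamma}$. Since $\psi^{-1}$ is again a homogeneous diffeomorphism of degree one between open cones satisfying \eqref{infty}, the identical scheme with $\psi^{-1}$ in place of $\psi$ yields the statement for $I_{\psi,\gamma}^{-1}$.
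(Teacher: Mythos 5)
This theorem is imported verbatim from \cite[Theorem 4.3]{RS4}; the present paper contains no proof of it, so there is nothing in-text to compare your argument against. Taken on its own terms, your dilation-commutation plus weighted Littlewood--Paley scheme is a legitimate and essentially complete route: the identity $I_{\psi,\gamma}D_t=D_tI_{\psi,\gamma}$ is correct, the exact scaling of the $\Dot{L}^2_\ka$ norm does transfer the uniform single-scale bound across dyadic blocks, and you correctly locate the role of $|\ka|<n/2$ in the $A_2$ condition for $|x|^{2\ka}$ and $\jp{x}^{2\ka}$, which is what makes both directions of the weighted square-function equivalence (and the bounded-overlap synthesis for the output pieces) available. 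Two points deserve more care than you give them. First, the two-sided bound $c|\xi|\leq|\psi(\xi)|\leq C|\xi|$ does \emph{not} follow from \eqref{infty} together with homogeneity on all of $\Gamma$ (the restriction of $|\psi|$ to $\Gamma\cap\Sph^{n-1}$ need not be bounded away from $0$ and $\infty$ near the boundary of the cone); it holds on $\supp\gamma$ under the standing convention that $\supp\gamma\cap\Sph^{n-1}$ is compactly contained in $\Gamma\cap\Sph^{n-1}$, and that is all your argument actually uses, both for the fixed-annulus localisation of $I_{\psi,\gamma}v_j$ and for the bounded overlap of the sets $\psi^{-1}(\supp\chi_j)\cap\supp\gamma$. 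Second, the comparability of the $L^2_\ka$ and $\Dot{L}^2_\ka$ norms on functions band-limited to a fixed annulus is not automatic: the nontrivial direction amounts to $\|f\|_{L^2}\leq C\|f\|_{\Dot{L}^2_\ka}$ for such $f$, which again needs $|\ka|<n/2$ (a short weighted Young/Cauchy--Schwarz argument with the Schwartz kernel of the frequency cut-off). Relatedly, the phrase ``analogous relation for $L^2_\ka$ up to dyadic equivalence'' is too loose, since $\jp{x}^\ka$ has no exact scaling; it is cleaner to first prove the $\Dot{L}^2_\ka$ and plain $L^2$ bounds and then deduce the $L^2_\ka$ bound from $\jp{x}^{2\ka}\sim 1+|x|^{2\ka}$ for $\ka\geq0$ and by duality for $\ka<0$. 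With these repairs your proof is sound.
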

\medspace
\par
\subsection{Comparison principle}
\label{SECTION:comparison}
The second tool is the comparison principle which
relates the smoothing estimate for the solution
$u(t,x)=e^{it f(D_x)}\varphi(x)$ with the operator $f(D_x)$ 
of the smoothing $\sigma(D_x)$
to that for $v(t,x)=e^{it g(D_x)}\varphi(x)$ with
$g(D_x)$ of $\tau(D_x)$:
\par
\medskip 
\par
\begin{thm}[{\cite[Theorem 2.5]{RS4}}]\label{prop:dim1eqmod}
Let $f,g\in C^1(\R_+)$ be real-valued and strictly
monotone on the support of a measurable function $\chi$ on $\R_+$.
Let $\sigma,\tau\in C^0(\R_+)$ be such that, for some $A>0$, we have
%\begin{equation}\label{EQ:compassdimmod}
\[
\frac{|\sigma(\rho)|}{|f^\prime(\rho)|^{1/2}}
\leq A \frac{|\tau(\rho)|}{|g^\prime(\rho)|^{1/2}}
\]
%\end{equation}
for all $\rho\in\supp\chi$ satisfying
$f^\prime(\rho)\not=0$ and $g^\prime(\rho)\not=0$.
Then we have
%\begin{equation}\label{EQ:dim1estmod}
\[
\|\chi(|D_x|)\sigma(|D_x|)e^{it f(|D_x|)}\varphi(x)\|_{L^2(\R_t)}
\leq A
\|\chi(|D_x|)\tau(|D_x|)e^{it g(|D_x|)}\varphi(x)\|_{L^2(\R_t)}
\]
%\end{equation}
for all $x\in\R^n$.
\end{thm}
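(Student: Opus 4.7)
The plan is to evaluate both $L^2(\R_t)$-norms in closed form via Plancherel's theorem in the time variable, thereby reducing the assertion to a pointwise comparison of integrands on $(0,\infty)$. Since everything on both sides is a radial Fourier multiplier acting on $\varphi$, the non-radial information about $\varphi$ and $x$ can be packaged into a single factor common to both sides.

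First I would expand the left-hand side by Fourier inversion and introduce polar coordinates $\xi=\rho\omega$ with $\rho>0$, $\omega\in\Sph^{n-1}$, obtaining
\[
\chi(|D_x|)\sigma(|D_x|)e^{itf(|D_x|)}\varphi(x)
=(2\pi)^{-n}\int_0^\infty\chi(\rho)\sigma(\rho)e^{itf(\rho)}\Phi_x(\rho)\,d\rho,
\]
where
\[
\Phi_x(\rho)=\rho^{n-1}\int_{\Sph^{n-1}}e^{i\rho x\cdot\omega}\widehat\varphi(\rho\omega)\,d\omega
\]
depends on $\varphi$ and $x$ but not on the choice of $f$ or $\sigma$. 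The time Fourier transform of $e^{itf(\rho)}$ is $2\pi\delta(\tau-f(\rho))$, so Plancherel in $t$ combined with the change of variables $\tau=f(\rho)$ (which is a bijection on $\supp\chi$ by the strict monotonicity, with Jacobian $|f'(\rho)|$) produces, for an $n$-dependent constant $c_n$,
\[
\n{\chi(|D_x|)\sigma(|D_x|)e^{itf(|D_x|)}\varphi(x)}_{L^2(\R_t)}^2
=c_n\int_0^\infty\frac{|\chi(\rho)\sigma(\rho)|^2}{|f'(\rho)|}\,|\Phi_x(\rho)|^2\,d\rho.
\]
The identical computation applied to $g,\tau$ yields the analogous identity with $|\tau(\rho)|^2/|g'(\rho)|$ in place of $|\sigma(\rho)|^2/|f'(\rho)|$ and with the same factor $|\Phi_x(\rho)|^2$. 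Squaring the hypothesis then gives the comparison integrand by integrand, and integration in $\rho$ completes the proof.

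The main technical point is the $\delta$-function manipulation and the change of variables at points where $f'$ or $g'$ vanishes on $\supp\chi$. A $C^1$ strictly monotone function can still have a nonempty zero set of $f'$, but its image under $f$ has Lebesgue measure zero in $\tau$, so this set contributes nothing on the Fourier-transformed side. Consequently the identity above is valid with the $\rho$-integral effectively restricted to $\{f'\neq 0\}\cap\{g'\neq 0\}\cap\supp\chi$, which is exactly the set on which the hypothesis on $|\sigma|/|f'|^{1/2}$ and $|\tau|/|g'|^{1/2}$ is posed. One small caveat to check is that the pointwise value of $\Phi_x(\rho)$ is well defined for a.e.\ $\rho$ (which follows from a density argument on Schwartz class, since both sides of the inequality are continuous in $\varphi$ in the natural topology).
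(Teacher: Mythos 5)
Your argument --- Plancherel's theorem in $t$ after the change of variables $\tau=f(\rho)$ in polar coordinates, which evaluates both sides in the closed form $c_n\int_0^\infty|\chi(\rho)\sigma(\rho)|^2|\Phi_x(\rho)|^2|f'(\rho)|^{-1}\,d\rho$ and reduces the claim to the pointwise hypothesis --- is precisely the proof of the cited result \cite[Theorem 2.5]{RS4}; the present paper only quotes the theorem without reproving it. Your disposal of the critical set $\{f'=0\}$ via the one-dimensional Sard property of $C^1$ functions likewise matches the treatment there, so the proposal is correct and follows essentially the same route as the source.
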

\begin{thm}[{\cite[Corollary 2.2]{RS4}}]\label{prop:dimneq}
Let $f,g\in C^1(\R^n)$ be real-valued functions
such that, for almost all $\xi'=(\xi_2,\ldots,\xi_n)\in\R^{n-1}$,
$f(\xi)$ and $g(\xi)$
are strictly monotone in $\xi_1$ 
on the support of a measurable function $\chi$ on $\R^n$.
Let $\sigma,\tau\in C^0(\R^n)$ be such that, for some $A>0$, we have
\[
\frac{|\sigma(\xi)|}{\abs{\partial_{1} f(\xi)}^{1/2}}
\leq A \, \frac{|\tau(\xi)|}{\abs{\partial_{1} g(\xi)}^{1/2}}
\]
for all $\xi\in\supp\chi$ satisfying
$\partial_1 f(\xi)\not=0$ and 
$\partial_1 g(\xi)\not=0$.
Then we have
\begin{multline*}
\n{\chi(D_x)\sigma(D_x)
e^{it f(D_x)}\varphi(x_1,x')}_{L^2(\R_t\times\R_{x'}^{n-1})} 
\\ \leq  A 
\|\chi(D_x)\tau(D_x)e^{it g(D_x)}\varphi(\widetilde x_1,x')\|_
{L^2(\R_t\times\R_{x'}^{n-1})}
\end{multline*}
for all $x_1,\widetilde x_1\in\R$, where $x'=(x_2,\ldots,x_n)\in\R^{n-1}$.
\end{thm}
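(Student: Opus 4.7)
The plan is to reduce to the one-dimensional comparison, Theorem \ref{prop:dim1eqmod}, by taking a partial Fourier transform in $x' = (x_2, \ldots, x_n)$ and freezing $\xi'=(\xi_2,\ldots,\xi_n)$ as a parameter.

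For $\varphi \in \mathcal{S}(\R^n)$ and $\xi' \in \R^{n-1}$, set $\Psi_{\xi'}(x_1) := \mathcal{F}_{\xi_1 \to x_1}^{-1}[\widehat\varphi(\cdot, \xi')](x_1)$, and for any Fourier multiplier $m(D_x)$ write $m(D_{x_1}, \xi')$ for the 1D multiplier in $x_1$ with symbol $\xi_1 \mapsto m(\xi_1, \xi')$. A direct calculation with the Fourier inversion formula yields
\[
\mathcal{F}_{x' \to \xi'}\bigl(m(D_x)\varphi\bigr)(x_1, \xi') = \bigl[m(D_{x_1}, \xi') \Psi_{\xi'}\bigr](x_1).
\]
Specialising to $m(\xi) = \chi(\xi)\sigma(\xi)e^{itf(\xi)}$ and applying Plancherel in $x'\leftrightarrow \xi'$ at every fixed $(t, x_1)$, together with Fubini, gives
\begin{equation*}
\n{\chi(D_x)\sigma(D_x)e^{itf(D_x)}\varphi(x_1,\cdot)}_{L^2(\R_t\times\R_{x'}^{n-1})}^2 = \int_{\R^{n-1}} \n{\chi(D_{x_1}, \xi')\sigma(D_{x_1}, \xi') e^{itf(D_{x_1}, \xi')}\Psi_{\xi'}(x_1)}_{L^2(\R_t)}^2 d\xi',
\end{equation*}
and the analogous identity holds for the right-hand side with $(\sigma, f, x_1)$ replaced by $(\tau, g, \widetilde x_1)$.

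For almost every $\xi' \in \R^{n-1}$, the hypothesis provides precisely the data required by the 1D comparison in $\xi_1$: the functions $f(\cdot, \xi'), g(\cdot, \xi')$ are strictly monotone on $\supp \chi(\cdot, \xi')$, and
\[
\frac{|\sigma(\xi_1, \xi')|}{|\partial_1 f(\xi_1, \xi')|^{1/2}} \leq A\, \frac{|\tau(\xi_1, \xi')|}{|\partial_1 g(\xi_1, \xi')|^{1/2}}
\]
at every $\xi_1$ where both partials are nonzero. Invoking Theorem \ref{prop:dim1eqmod} (in its $\R$-valued analogue; see below) applied to $\Psi_{\xi'}$ at the points $x_1$ and $\widetilde x_1$ yields the pointwise-in-$\xi'$ estimate
\[
\n{\chi(D_{x_1},\xi')\sigma(D_{x_1},\xi')e^{itf(D_{x_1},\xi')}\Psi_{\xi'}(x_1)}_{L^2(\R_t)} \leq A\, \n{\chi(D_{\widetilde x_1},\xi')\tau(D_{\widetilde x_1},\xi')e^{itg(D_{\widetilde x_1},\xi')}\Psi_{\xi'}(\widetilde x_1)}_{L^2(\R_t)}.
\]
Squaring, integrating in $\xi'$, and running the Plancherel identity above in reverse on the right-hand side delivers the claim.

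The main obstacle is a mismatch of formulation: Theorem \ref{prop:dim1eqmod} is stated with $f, g \in C^1(\R_+)$ acting via $|D_x|$, whereas the 1D operators arising here have symbols defined on all of $\R$ in the variable $\xi_1$. This is handled by decomposing $\supp \chi(\cdot, \xi')$ into the maximal subintervals of $\R$ on which both $f(\cdot, \xi')$ and $g(\cdot, \xi')$ are strictly monotone --- guaranteed for a.e.\ $\xi'$ by the hypothesis --- and rerunning the proof of Theorem \ref{prop:dim1eqmod} on each such interval. The essential content of that proof is the change of variables $\rho = f(\xi_1)$ (respectively $\rho = g(\xi_1)$), which on any monotone interval reduces each $L^2(\R_t)$ norm to a Plancherel identity in $\rho$ and leaves only the pointwise ratio comparison to verify.
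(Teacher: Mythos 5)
Your argument is correct and is essentially the proof given in \cite{RS4} (the paper only quotes this as \cite[Corollary 2.2]{RS4} without reproving it): there the $n$-dimensional statement is derived from the genuinely one-dimensional comparison principle on the line (\cite[Theorem 2.1]{RS4}) by exactly your partial Fourier transform and Plancherel identity in $x'\leftrightarrow\xi'$, with the one-dimensional case itself resting on the change of variables $\rho=f(\xi_1)$ and Plancherel in $t$. Your observation that the version quoted in this paper as Theorem \ref{prop:dim1eqmod} is the radial variant and that one must instead run the underlying argument on each monotonicity set in $\xi_1$ is accurate and correctly resolved.
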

\par
\medskip 
\par
Let us now repeat here important examples of the use of the comparison
principle discussed in \cite{RS4}.
Applying Theorem \ref{prop:dimneq} with $n=1$ in two directions, we
immediately obtain that for 
$l,m>0$, we have
\begin{equation}\label{prop:dim1ex}
\n{|D_x|^{(m-1)/2}e^{it|D_x|^{m}}
\varphi(x)}_{L^2(\R_t)}=
\sqrt{\frac{l}{m}}
\n{|D_x|^{(l-1)/2}e^{it|D_x|^{l}}
\varphi(x)}_{L^2(\R_t)}
\end{equation}
for every $x\in\R$,
assuming that $\supp\widehat{\varphi}\subset [0,+\infty)$ or
$(-\infty,0]$.
Here we neglect $x'=(x_2,\ldots,x_n)$ 
in a natural way and
just write $x=x_1$ and $D_x=D_1$.
Applying Theorem \ref{prop:dimneq} with $n=2$, we
similarly obtain that for $l,m>0$, we have 
\begin{multline}\label{prop:dim2ex}
\n{|D_y|^{(m-1)/2}e^{itD_x|D_y|^{m-1}}\varphi(x,y)}_{L^2(\R_t\times\R_y)}
\\ =
\n{|D_y|^{(l-1)/2}e^{itD_x|D_y|^{l-1}}\varphi(x,y)}_{L^2(\R_t\times\R_y)}
\end{multline}
for every $x\in\R$.
Here we have used the notation $(x,y)=(x_1,x_2)$,
and $(D_x,D_y)=(D_1,D_2)$.
On the other hand, in the case $n=1$, we have easily
\begin{equation}\label{core}
\n{e^{itD_x}\varphi(x)}_{L^2(\R_t)}
=\n{\varphi}_{L^2\p{\R_{x}}}
\quad \text{for all $x\in\R$},
\end{equation}
which is a straightforward consequence of the fact
$e^{itD_x}\varphi(x)=\varphi(x+t)$
and the translation invariance of the Lebesgue measure.
By using equality \eqref{core},
we can estimate the right hand sides of equalities \eqref{prop:dim1ex}
and \eqref{prop:dim2ex} with $l=1$, and as a result, we have
the following low dimensional pointwise estimates
\begin{align*}
&\n{|D_x|^{(m-1)/2}e^{it|D_x|^m}\varphi(x)}_{L^2(\R_t)}
\leq C\n{\varphi}_{L^2(\R_x)},
\\
&\n{|D_y|^{(m-1)/2}e^{itD_x|D_y|^{m-1}}\varphi(x,y)}_{L^2(\R_t\times\R_y)}
\leq C\n{\varphi}_{L^2\p{\R^2_{x,y}}}
\end{align*}
for all $x\in\R$, from which we  straightforwardly obtain the following result:
\par
\medskip 
\par
\begin{cor}[{\cite[Corollary 3.3]{RS4}}]\label{Th:typeI}
Suppose $n\geq1$, $m>0$, and $s>1/2$.
Then we have
\begin{equation}\label{model:1}
\n{\jp{x_1}^{-s}|D_1|^{(m-1)/2}e^{it|D_1|^m}\varphi(x)}_{L^2(\R_t\times\R^n_x)}
\leq
 C\n{\varphi}_{L^2(\R_x^n)}.
\end{equation}
Suppose $n\geq2$, $m>0$, and $s>1/2$.
Then we have
\begin{equation}\label{model:2}
\n{\jp{x_1}^{-s}|D_n|^{(m-1)/2}e^{itD_1|D_n|^{m-1}}\varphi(x)}
_{L^2(\R_t\times\R^n_x)}
\leq
 C\n{\varphi}_{L^2(\R_x^n)}.
\end{equation}
\end{cor}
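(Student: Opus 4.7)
The plan is to upgrade the two pointwise-in-$x_1$ bounds
\[
\n{|D_1|^{(m-1)/2}e^{it|D_1|^m}\varphi(x_1)}_{L^2(\R_t)}\leq C\n{\varphi}_{L^2(\R_{x_1})},
\]
\[
\n{|D_n|^{(m-1)/2}e^{itD_1|D_n|^{m-1}}\varphi(x_1,x_n)}_{L^2(\R_t\times\R_{x_n})}\leq C\n{\varphi}_{L^2\p{\R^2_{x_1,x_n}}},
\]
which are displayed immediately before the statement (and obtained by combining the comparison identities \eqref{prop:dim1ex} and \eqref{prop:dim2ex} with $l=1$ against the translation-invariance identity \eqref{core}), into weighted $n$-dimensional bounds. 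The extension is carried out by Fubini, exploiting the fact that the operators appearing in \eqref{model:1} and \eqref{model:2} are Fourier multipliers in only a proper subset of the $x$-variables, together with the elementary observation that $\int_{\R}\jp{x_1}^{-2s}\,dx_1<\infty$ precisely when $s>1/2$.

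For \eqref{model:1}, I would freeze $x'=(x_2,\ldots,x_n)$ as parameters. Since both $|D_1|^{(m-1)/2}$ and $e^{it|D_1|^m}$ act only in $x_1$, the first pointwise bound above, applied to $y\mapsto\varphi(y,x')$, gives uniformly in $x_1\in\R$ and $x'\in\R^{n-1}$ the inequality
\[
\n{|D_1|^{(m-1)/2}e^{it|D_1|^m}\varphi(\cdot,x')(x_1)}_{L^2(\R_t)}^2
\leq C\n{\varphi(\cdot,x')}_{L^2(\R_{x_1})}^2.
\]
Multiplying by $\jp{x_1}^{-2s}$, integrating in $x_1$ against the finite measure $\jp{x_1}^{-2s}\,dx_1$, and then integrating in $x'$ via Fubini, yields \eqref{model:1}.

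For \eqref{model:2} the argument is identical except that one freezes only the middle variables $(x_2,\ldots,x_{n-1})$ and applies the two-dimensional pointwise bound to the pair $(x_1,x_n)$; this is legitimate because $|D_n|^{(m-1)/2}$ and $e^{itD_1|D_n|^{m-1}}$ act only on that pair. Squaring, weighting by $\jp{x_1}^{-2s}$, and integrating successively in $x_1$ and in the frozen variables closes the proof. No serious obstacle arises: the decisive ingredients (the comparison identities \eqref{prop:dim1ex}--\eqref{prop:dim2ex}, the translation identity \eqref{core}, and the integrability of $\jp{x_1}^{-2s}$ for $s>1/2$) are already in place, and only a routine Fubini argument is required to assemble them. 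The mildest point of care is the spectral-support hypothesis needed to invoke \eqref{prop:dim1ex}/\eqref{prop:dim2ex}, which is handled by splitting $\varphi$ according to $\pm\xi_1\geq 0$ (and $\pm\xi_n\geq 0$ in the second case) and summing the resulting pieces.
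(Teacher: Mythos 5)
Your proposal is correct and follows essentially the same route as the paper: the low-dimensional pointwise estimates (obtained from \eqref{prop:dim1ex}--\eqref{prop:dim2ex} with $l=1$ and the translation identity \eqref{core}, after splitting according to the sign of the relevant frequency) are upgraded to \eqref{model:1} and \eqref{model:2} by integrating the weight $\jp{x_1}^{-2s}$, which is finite for $s>1/2$, and applying Fubini in the frozen variables. This is exactly the ``straightforward'' passage the paper invokes, so no further comment is needed.
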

\par
\medskip 
\par
We remark that estimate \eqref{model:1} has been already
the invariant estimate \eqref{EQ:maininv}
for the normal form $a(\xi)=\xi_1^m$
(if we replace the weight $\jp{x_1}^{-s}$ by a smaller one $\jp{x}^{-s}$).
%\begin{rem}\label{Rem:invariant}
%Estimate \eqref{model:1} implies
%\[
%\n{\jp{x}^{-s}|D_1|^{(m-1)/2}e^{itD_1^3}\varphi(x)}_{L^2(\R_t\times\R^2_x)}
%\leq
% C\n{\varphi}_{L^2(\R_x^2)}
%\]
%which is the invariant estimate \eqref{EQ:maininv}
%for the normal form $a(\xi_1,\xi_2)=\xi_1^3$ listed in [\,I\,] of
%\eqref{EQ:examples2}.
%\end{rem}

\medspace

\subsection{Secondary comparison}
\label{SECTION:second_comparison}
We remark that Corollary \ref{Th:typeI} is just a consequence of 
trivial equality \eqref{core}, and the proof of Theorem \ref{M:H1}
was carried out in \cite{RS4} by reducing estimate \eqref{EQ:main1} to
estimate \eqref{model:1} (elliptic case) or estimate \eqref{model:2}
(non-elliptic case) in Corollary \ref{Th:typeI}
via canonical transformations discussed in Section \ref{SECTION:canonical}.
Let us further compare estimates in Theorem \ref{M:H1}
and Corollary \ref{Th:typeI} by using the comparison principle again
to obtain {\it secondary comparison} results.
%which play an important role in Section \ref{SECTION:nondisp}.
In this sense, the results stated below are obtained from just the translation
invariance of the Lebesgue measure via a combination use of the comparison
principle and the canonical transformation.
\par
\medskip 
\par
Now, in notation of Theorem \ref{prop:dim1eqmod}, setting
$\tau(\rho)=\rho^{(m-1)/2}$ and
$g(\rho)=\rho^{m}$, we have
$|\tau(\rho)|/|g'(\rho)|^{1/2}=m^{-1/2}$.
Hence, noticing that $\chi(D_x)$ is $L^2$--bounded for $\chi\in L^\infty$,
we obtain the following result from Theorem \ref{M:H1}
with $a(\xi)=|\xi|^m$:
\par
\medskip 
\par
\begin{cor}[{\cite[Corollary 7.3]{RS4}}]\label{COR:RStype}
Suppose $n\geq1$, $s>1/2$.
Let $\chi\in L^\infty(\R_+)$. 
Let $f\in C^1(\R_+)$ be real-valued and
strictly monotone on $\supp \chi$.
Let $\sigma\in C^0(\R_+)$
be such that for some $A>0$ we have
%\begin{equation}\label{EQ:RStypeass}
\[
|\sigma(\rho)|\leq A |f^\prime(\rho)|^{1/2}
\]
%\end{equation}
for all $\rho\in\supp\chi$.
Then we have
%\begin{equation}\label{EQ:RStype1}
\[
\n{\jp{x}^{-s}\chi(|D_x|)\sigma(|D_x|)
e^{itf(|D_x|)}\varphi(x)}_\L2tx
\leq C\n{\varphi}_\Lx.
\]
%\end{equation}
\end{cor}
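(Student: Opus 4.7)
The plan is to reduce the desired estimate to a known model smoothing estimate via the comparison principle of Theorem \ref{prop:dim1eqmod}. The model I would use is Theorem \ref{M:H1} applied to $a(\xi)=|\xi|^m$ with any fixed $m>0$ (say $m=2$), which provides
\[
\n{\jp{x}^{-s}|D_x|^{(m-1)/2}e^{it|D_x|^m}\psi(x)}_{L^2(\R_t\times\R^n_x)}\leq C\n{\psi}_{L^2(\R^n_x)}
\quad (s>1/2).
\]
All the heavy analytic content for Corollary \ref{COR:RStype} will be imported through this single black box.

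First I would set up the comparison by choosing $g(\rho)=\rho^m$ and $\tau(\rho)=\rho^{(m-1)/2}$ in Theorem \ref{prop:dim1eqmod}, with $f$ and $\sigma$ as given in the statement. A direct computation gives
\[
\frac{|\tau(\rho)|}{|g'(\rho)|^{1/2}}=\frac{\rho^{(m-1)/2}}{(m\rho^{m-1})^{1/2}}=\frac{1}{\sqrt{m}},
\]
so the comparison hypothesis $|\sigma(\rho)|/|f'(\rho)|^{1/2}\leq A'\,|\tau(\rho)|/|g'(\rho)|^{1/2}$ on $\supp\chi$ becomes exactly the given bound $|\sigma(\rho)|\leq A|f'(\rho)|^{1/2}$ upon taking $A'=A\sqrt{m}$. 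Since $g$ is strictly increasing on $\R_+$ and $f$ is strictly monotone on $\supp\chi$ by hypothesis, Theorem \ref{prop:dim1eqmod} applies and produces the pointwise-in-$x$ estimate
\[
\n{\chi(|D_x|)\sigma(|D_x|)e^{itf(|D_x|)}\varphi(x)}_{L^2(\R_t)}\leq A\sqrt{m}\,\n{\chi(|D_x|)|D_x|^{(m-1)/2}e^{it|D_x|^m}\varphi(x)}_{L^2(\R_t)}
\]
for every $x\in\R^n$.

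Next I would square both sides and integrate against $\jp{x}^{-2s}\,dx$ to obtain
\[
\n{\jp{x}^{-s}\chi(|D_x|)\sigma(|D_x|)e^{itf(|D_x|)}\varphi}_{\L2tx}\leq A\sqrt{m}\,\n{\jp{x}^{-s}\chi(|D_x|)|D_x|^{(m-1)/2}e^{it|D_x|^m}\varphi}_{\L2tx}.
\]
On the right-hand side, the three operators $\chi(|D_x|)$, $|D_x|^{(m-1)/2}$, and $e^{it|D_x|^m}$ are all Fourier multipliers and hence commute, so setting $\widetilde\varphi=\chi(|D_x|)\varphi$ the right side equals $\n{\jp{x}^{-s}|D_x|^{(m-1)/2}e^{it|D_x|^m}\widetilde\varphi}_{\L2tx}$. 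Applying the model estimate to $\widetilde\varphi$ and using the $L^2$-boundedness $\n{\widetilde\varphi}_{\Lx}\leq\n{\chi}_{L^\infty(\R_+)}\n{\varphi}_{\Lx}$ completes the argument.

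There is no serious analytic obstacle here: the comparison principle moves us from the abstract pair $(f,\sigma)$ to the standard model $(\rho^m,\rho^{(m-1)/2})$ in an essentially algebraic way, and the fact that $\chi(|D_x|)$ is a Fourier multiplier, commuting with the other Fourier multipliers on the right, allows it to be absorbed into the Cauchy data without interacting with the physical-space weight $\jp{x}^{-s}$. The only point requiring any attention is the verification of the hypotheses of Theorem \ref{prop:dim1eqmod}, and this is automatic once $m>0$ is fixed so that $g(\rho)=\rho^m$ is strictly monotone on $\R_+$.
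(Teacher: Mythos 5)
Your proposal is correct and follows essentially the same route as the paper: the authors also apply Theorem \ref{prop:dim1eqmod} with $g(\rho)=\rho^m$, $\tau(\rho)=\rho^{(m-1)/2}$ (so that $|\tau|/|g'|^{1/2}=m^{-1/2}$), then combine the resulting pointwise-in-$x$ comparison with Theorem \ref{M:H1} for $a(\xi)=|\xi|^m$ and the $L^2$-boundedness of $\chi(|D_x|)$. Your extra care in checking that the hypothesis is only needed where $f'(\rho)\neq0$ and that $\chi(|D_x|)$ commutes past the other Fourier multipliers is consistent with, and slightly more explicit than, the paper's one-line derivation.
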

\par
\medskip 
\par
Similarly, in notation of Theorem \ref{prop:dimneq}, setting
$\tau(\xi)=|\xi_1|^{(m-1)/2}$ and
$g(\xi)=|\xi_1|^{m}$, we have
$|\tau(\xi)|/|\partial g/\partial \xi_1 (\xi)|^{1/2}=m^{-1/2}$.
Then we obtain the following result from estimate \eqref{model:1} of
Corollary \ref{Th:typeI}:
\par
\medskip 
\par
\begin{cor}\label{COR:dimnex}
Suppose $n\geq 1$ and $s>1/2$.
Let $\chi\in L^\infty(\R^n)$. 
Let $f\in C^1(\R^n)$ be a real-valued function such that,
for almost all $\xi'=(\xi_2,\ldots,\xi_n)\in\R^{n-1}$ ,
$f(\xi)$ is strictly monotone in $\xi_1$
on $\supp\chi$.
Let $\sigma\in C^0(\R^n)$ be such that
for some $A>0$ we have
\[
|\sigma(\xi)|
\leq A \left|\partial_1 f(\xi)
\right|^{1/2}
\]
for all $\xi\in\supp\chi$.
Then we have
\[
\n{\jp{x_1}^{-s}\chi(D_x)\sigma(D_x)
e^{it f(D_x)}\varphi(x)}_{L^2(\R_t\times\R_x^n)}\leq C
\n{\varphi}_\Lx.
\]
\end{cor}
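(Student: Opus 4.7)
My plan is to combine the multi-dimensional comparison principle (Theorem \ref{prop:dimneq}) with the trivial pointwise identity \eqref{core}, following exactly the schema used to derive Corollary \ref{COR:RStype} from Theorem \ref{M:H1}, but with the simplest possible model on the right-hand side.

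First, I would invoke Theorem \ref{prop:dimneq} with the choices $g(\xi)=\xi_1$ and $\tau(\xi)\equiv 1$. Since $\partial_1 g(\xi)=1$, the function $g$ is strictly monotone in $\xi_1$ on all of $\R^n$, and the comparison hypothesis becomes
\[
\frac{|\sigma(\xi)|}{|\partial_1 f(\xi)|^{1/2}} \;\leq\; A \;=\; A\,\frac{|\tau(\xi)|}{|\partial_1 g(\xi)|^{1/2}},
\]
which is precisely the assumed bound $|\sigma(\xi)|\leq A|\partial_1 f(\xi)|^{1/2}$ on $\supp\chi$. Theorem \ref{prop:dimneq} then yields, for all $x_1, \widetilde{x}_1\in\R$,
\[
\n{\chi(D_x)\sigma(D_x) e^{itf(D_x)}\varphi(x_1,x')}_{L^2(\R_t\times\R_{x'}^{n-1})} \leq A\,\n{\chi(D_x)e^{itD_1}\varphi(\widetilde{x}_1,x')}_{L^2(\R_t\times\R_{x'}^{n-1})}.
\]

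Next I would evaluate the right-hand side using the triviality of $e^{itD_1}$. Since $\chi(D_x)$ commutes with the free translation $e^{itD_1}$, and since $e^{itD_1}[\chi(D_x)\varphi](\widetilde{x}_1,x')=[\chi(D_x)\varphi](\widetilde{x}_1+t,x')$, the change of variables $s=\widetilde{x}_1+t$ in $t$ (that is, the translation-invariance identity \eqref{core}) gives
\[
\n{\chi(D_x)e^{itD_1}\varphi(\widetilde{x}_1,x')}_{L^2(\R_t\times\R_{x'}^{n-1})} = \n{\chi(D_x)\varphi}_{L^2(\R^n_x)} \leq \n{\chi}_{L^\infty}\n{\varphi}_{L^2(\R^n_x)},
\]
independently of $\widetilde{x}_1$. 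Combining yields the pointwise-in-$x_1$ bound
\[
\n{\chi(D_x)\sigma(D_x) e^{itf(D_x)}\varphi(x_1,x')}_{L^2(\R_t\times\R_{x'}^{n-1})} \leq C\,\n{\varphi}_{L^2(\R^n_x)}.
\]

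To finish, I would square this bound, multiply by $\jp{x_1}^{-2s}$, and integrate over $x_1\in\R$; the condition $s>1/2$ ensures that $\int_{\R}\jp{x_1}^{-2s}\,dx_1<\infty$, yielding the claim. I do not expect any genuine obstacle: once one observes that the right-hand side of the comparison may be taken to be the free translation $e^{itD_1}$ (whose $L^2_t$ norm is, pointwise in $x_1$, exactly the $L^2$ data norm by \eqref{core}), the statement reduces entirely to the integrability of $\jp{x_1}^{-2s}$ on $\R$, which is the content of the assumption $s>1/2$.
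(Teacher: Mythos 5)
Your proof is correct and takes essentially the same route as the paper: both apply the comparison principle of Theorem \ref{prop:dimneq} against a model symbol depending only on $\xi_1$, reduce the model to the translation invariance identity \eqref{core}, and then integrate $\jp{x_1}^{-2s}$ using $s>1/2$. The only (cosmetic) difference is that the paper compares with $g(\xi)=|\xi_1|^m$, $\tau(\xi)=|\xi_1|^{(m-1)/2}$ and quotes estimate \eqref{model:1} of Corollary \ref{Th:typeI}, whereas you take $g(\xi)=\xi_1$, $\tau\equiv 1$ and invoke \eqref{core} directly, which even avoids the need to split at $\xi_1=0$.
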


%\newpage

% ------------------------------------------------------------------------

%\subsection*{Acknowledgment}
%Many thanks to our \TeX-pert for developing this class file.
% ------------------------------------------------------------------------
\end{document}